\newtheorem{theorem}{Theorem}
\newtheorem{lemma}[theorem]{Lemma}
\theoremstyle{definition}
\theoremstyle{remark}
\newcommand{\Address}{{
\bigskip
\footnotesize

Ho-Hon Leung, \textsc{Department of Mathematical Sciences, United Arab Emirates University, Al Ain, United Arab Emirates}\par\nopagebreak
\textit{E-mail address}: \texttt{hohon.leung@uaeu.ac.ae}
}}
\title{Derivations on four dimensional genetic Volterra algebra}
\author{Ho-Hon Leung}
\date{}
\begin{document}

\begin{abstract}
In this paper, we describe all derivations on four dimensional genetic Volterra algebras. We show that any local derivation is a derivation of the algebra. It is a positive answer to a conjecture made by Ganikhodzhaev, Mukhamedov, Pirnapasov and Qaralleh. 
\end{abstract}

\maketitle

\section{Introduction} \label{section1}

Let ${\bf x}=(x_1,x_2,x_3,x_4), {\bf y}=(y_1,y_2,y_3,y_4)\in\mathbb{R}^4$. We introduce a multiplication rule on $\mathbb{R}^4$ by 
\begin{align}
({\bf x}\ast{\bf y})_k&=\sum_{i,j=1}^4 p_{ij,k}x_i y_j
\end{align}for $k=1,2,3,4$. The algebraic properties of $(\mathbb{R}^4,\ast)$ depend on the structure constants $p_{ij,k}$'s for all $i,j,k=1,2,3,4$. $(\mathbb{R}^4,\ast)$ is called {\it an 4-dimensional genetic Volterra algebra} if the structure constants satisfy the following properties: 
\[ p_{ij,k}\geq 0, \mbox{  }p_{ij,k}=p_{ji,k},\mbox{  }\sum_{k=1}^4 p_{ij,k}=1,\mbox{   }i,j,k\in\{1,2,3,4\},\]and \[p_{ij,k}=0 \mbox{   if  }k\notin\{i,j\},\mbox{   for all  }i,j,k\in\{1,2,3,4\}.\]It is obvious that 
\begin{align}
\label{equation1} p_{ii,i}=1,\mbox{  and   }p_{ij,i}+p_{ij,j}=1,\mbox{   for all  }i,j\in\{1,2,3,4\}, i\neq j.
\end{align}Genetic Volterra algebra has its origin in the study of population genetics. For examples, the use of abstract algebra in the study of genetics can be found in the work of Etherington \cite{Etherington}, Reed \cite{Reed} and Worz-Busekros \cite{Busekros}. 

A {\it derivation} on an algebra $(A,\ast)$ is a linear map $D\colon A\rightarrow A$ such that \[D(u\ast v)=D(u)\ast v + u\ast D(v)\]for all $u,v\in A$. If $D=0$, then it is called a {\it trivial derivation}. The study of derivations of general genetic algebras and their interpretations can be found in the work of Gonshor \cite{Gonshor} and Holgate \cite{Holgate}.

Associativity and derivations of genetic Volterra algebras are studied in a recent paper by Ganikhodzhaev, Mukhamedov, Pirnapasov and Qaralleh (GMPQ) \cite{Farrukh}. In particular, it is shown that any local derivation is a derivation for three dimensional genetic Volterra algebra. It is natural to ask whether such result would be true for any $n$-dimensional genetic Volterra algebra (Conjecture 5.6 in GMPQ's paper \cite{Farrukh}). In this article, we describe all derivations of four dimensional genetic Volterra algebra in Section \ref{section2}. As a corollary, we show that the conjecture is true for $n=4$ in Section \ref{section3}. To simplify our notations, we denote the four dimensional genetic Volterra algebra by $(\mathbb{R}^4,\ast)$. 

\section{Main results} \label{section2}

Let $\{{\bf e_1, e_2, e_3, e_4}\}$ be the standard basis of $\mathbb{R}^4$ and $[4]=\{1,2,3,4\}$. By (\ref{equation1}), 
\begin{align}
\label{equation2} {\bf e_i}\ast {\bf e_i}={\bf e_i}, \mbox{  }{\bf e_i}\ast{\bf e_j}=p_{ij,i}{\bf e_i}+p_{ij,j}{\bf e_j}, \mbox{  }i,j\in [4].
\end{align}We note that $(\mathbb{R}^4,\ast)$ is a commutative algebra.

Let $D\colon\mathbb{R}^4\rightarrow\mathbb{R}^4$ be a derivation on $(\mathbb{R}^4,\ast)$. As a linear transformation, $D$ can be represented by a 4x4 matrix $(d_{ij})_{i,j\in [4]}$ such that \[D({\bf e_i})=\sum_{j=1}^4 d_{ij} {\bf e_j}, \mbox{   }i\in [4].\]Let \[[4]_i= \{j\in [4]\mbox{  }|\mbox{  }p_{ij,i}=1/2\}.\]The following two lemmas will be used throughout the article.

\begin{lemma} \label{lemma1}
Let $i\in [4]$. If $j\in [4]$, $i\neq j$ and $j\notin [4]_i$, then $d_{ij}=d_{ji}=0$.
\end{lemma}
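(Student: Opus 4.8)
The plan is to extract the required vanishing purely from the idempotent relations ${\bf e_i}\ast{\bf e_i}={\bf e_i}$ in (\ref{equation2}). Applying $D$ to this identity and using the Leibniz rule together with the commutativity of $(\mathbb{R}^4,\ast)$ gives $D({\bf e_i})=2\,{\bf e_i}\ast D({\bf e_i})$. The first step is then to expand the right-hand side: writing $D({\bf e_i})=\sum_{k\in[4]}d_{ik}{\bf e_k}$ and using (\ref{equation2}), one obtains ${\bf e_i}\ast D({\bf e_i})=d_{ii}{\bf e_i}+\sum_{k\neq i}d_{ik}\bigl(p_{ik,i}{\bf e_i}+p_{ik,k}{\bf e_k}\bigr)$. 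Since each product ${\bf e_i}\ast{\bf e_k}$ lies in the linear span of ${\bf e_i}$ and ${\bf e_k}$, the only contribution to the ${\bf e_j}$-coefficient, for a fixed $j\neq i$, comes from the single term $d_{ij}p_{ij,j}{\bf e_j}$.

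The second step is to compare the coefficient of ${\bf e_j}$ on the two sides of $D({\bf e_i})=2\,{\bf e_i}\ast D({\bf e_i})$. On the left it equals $d_{ij}$ and on the right it equals $2p_{ij,j}d_{ij}$, so $d_{ij}(1-2p_{ij,j})=0$. Using $p_{ij,i}+p_{ij,j}=1$ from (\ref{equation1}), this is the same as $d_{ij}(2p_{ij,i}-1)=0$. The hypothesis $j\notin[4]_i$ means exactly $p_{ij,i}\neq 1/2$, so the scalar $2p_{ij,i}-1$ is nonzero and hence $d_{ij}=0$.

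The third step disposes of $d_{ji}$ by symmetry: running the same computation with $i$ and $j$ interchanged gives $d_{ji}(2p_{ji,j}-1)=0$. Because the structure constants satisfy $p_{ji,j}=p_{ij,j}=1-p_{ij,i}$, we get $2p_{ji,j}-1=1-2p_{ij,i}\neq 0$ under the same hypothesis, so $d_{ji}=0$.

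I do not anticipate a real obstacle; the only points needing care are the coefficient bookkeeping in the first step — in particular, checking that no term $d_{ik}({\bf e_i}\ast{\bf e_k})$ with $k\notin\{i,j\}$ can leak into the ${\bf e_j}$-component — and the routine passage between $p_{ij,i}$ and $p_{ij,j}$ via (\ref{equation1}). Comparing ${\bf e_i}$-coefficients in the same identity also yields $d_{ii}=-2\sum_{k\neq i}p_{ik,i}d_{ik}$, which is not used for this lemma but is presumably kept for later.
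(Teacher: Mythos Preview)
Your argument is correct. The paper does not actually prove this lemma; it simply cites it as a special case of Lemma~5.1 in \cite{Farrukh}. Your direct computation from the idempotent identity ${\bf e_i}\ast{\bf e_i}={\bf e_i}$ is the natural self-contained proof, and in fact the paper repeats essentially the same computation immediately afterwards in the proof of Lemma~\ref{lemma2.5}. Your closing remark about the ${\bf e_i}$-coefficient, once combined with the vanishing $d_{ij}=0$ for $j\notin[4]_i$ that you have just established, \emph{is} the row-sum identity of Lemma~\ref{lemma2.5}: the surviving terms have $p_{ik,i}=1/2$, so $d_{ii}=-\sum_{k\in[4]_i}d_{ik}$ and hence $\sum_j d_{ij}=0$.
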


\begin{proof}
It is a special case of Lemma 5.1 in GMPQ's paper \cite{Farrukh}.
\end{proof}

\begin{lemma}  \label{lemma2}
Let $i,j\in [4]$. If $p_{ij,j}\neq 1/2$ for all $i,j$, then any derivation on $(\mathbb{R}^4, \ast)$ is trivial.
\end{lemma}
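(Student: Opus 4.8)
The plan is to exploit the hypothesis first to force $D$ to be diagonal via Lemma \ref{lemma1}, and then to use the idempotency of the basis vectors to kill the diagonal entries.

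First, observe that the assumption $p_{ij,j}\neq 1/2$ for all $i,j\in[4]$ with $i\neq j$ means precisely that $[4]_i=\varnothing$ for every $i\in[4]$: indeed $[4]_i=\{\,j\in[4]\mid p_{ij,i}=1/2\,\}$ and $p_{ij,i}=1-p_{ij,j}$ by (\ref{equation1}), so $p_{ij,i}=1/2$ would give $p_{ij,j}=1/2$. Consequently, for every pair $i\neq j$ the hypothesis of Lemma \ref{lemma1} holds, and therefore $d_{ij}=0$. Thus the matrix $(d_{ij})$ of $D$ is diagonal, i.e. $D({\bf e_i})=d_{ii}{\bf e_i}$ for all $i\in[4]$.

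For the second step I would apply $D$ to the idempotent relation ${\bf e_i}\ast{\bf e_i}={\bf e_i}$ from (\ref{equation2}). The derivation property gives
\[
D({\bf e_i})=D({\bf e_i}\ast{\bf e_i})=D({\bf e_i})\ast{\bf e_i}+{\bf e_i}\ast D({\bf e_i})=2d_{ii}({\bf e_i}\ast{\bf e_i})=2d_{ii}{\bf e_i}.
\]
Comparing this with $D({\bf e_i})=d_{ii}{\bf e_i}$ yields $d_{ii}=2d_{ii}$, hence $d_{ii}=0$. Since this holds for every $i\in[4]$, all entries of $(d_{ij})$ vanish, so $D=0$ is trivial.

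There is no real obstacle here once Lemma \ref{lemma1} is available; the only point needing a moment's care is verifying that the stated hypothesis does make $[4]_i$ empty for every $i$, so that Lemma \ref{lemma1} can be applied in every off-diagonal position. As an alternative that avoids invoking Lemma \ref{lemma1}, one can argue directly: writing $D({\bf e_i})=\sum_{j}d_{ij}{\bf e_j}$ and expanding $D({\bf e_i}\ast{\bf e_i})=2\,{\bf e_i}\ast D({\bf e_i})$ with (\ref{equation2}), the coefficient of ${\bf e_j}$ for $j\neq i$ reads $d_{ij}=2p_{ij,j}d_{ij}$, which forces $d_{ij}=0$ since $p_{ij,j}\neq 1/2$, and the coefficient of ${\bf e_i}$ then gives $-d_{ii}=2\sum_{j\neq i}p_{ij,i}d_{ij}=0$.
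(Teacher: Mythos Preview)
Your argument is correct. The paper itself does not prove this lemma independently: its entire proof is a one-line citation to Corollary~5.2 of the GMPQ paper~\cite{Farrukh}. Your main route---using Lemma~\ref{lemma1} to force all off-diagonal entries to vanish (since the hypothesis makes every $[4]_i$ empty), and then the idempotent relation ${\bf e_i}\ast{\bf e_i}={\bf e_i}$ to kill the diagonal---is presumably the content behind that cited corollary in dimension four. Your alternative computation at the end, expanding $D({\bf e_i}\ast{\bf e_i})=2\,{\bf e_i}\ast D({\bf e_i})$ and reading off the coefficients directly, is in fact fully self-contained and does not rely on Lemma~\ref{lemma1} (hence not on~\cite{Farrukh}) at all, so it gives strictly more than what the paper offers here.
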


\begin{proof}
It is a special case of Corollary 5.2 in GMPQ's paper \cite{Farrukh}.
\end{proof}

We need to describe all non-trivial derivations of $(\mathbb{R}^4,\ast)$. Without loss of generality (up to a permutation of elements in the standard basis of $\mathbb{R}^4$), by Lemma \ref{lemma2}, we need to analyse the following cases:
 \renewcommand{\labelenumi}{\Alph{enumi}}
\begin{enumerate}
  \item $|[4]_1|=1, |[4]_2|=1, |[4]_3|=0, |[4]_4|=0$.
  \item $|[4]_1|=1, |[4]_2|=1, |[4]_3|=1, |[4]_4|=1$.
  \item $|[4]_1|=2, |[4]_2|=1, |[4]_3|=1, |[4]_4|=0$.
  \item $|[4]_1|=2, |[4]_2|=2, |[4]_3|=2, |[4]_4|=0$.
  \item $|[4]_1|=3, |[4]_2|=1, |[4]_3|=1, |[4]_4|=1$.
  \item $|[4]_1|=2, |[4]_2|=1, |[4]_3|=2, |[4]_4|=1$.
  \item $|[4]_1|=3, |[4]_2|=2, |[4]_3|=2, |[4]_4|=1$.
  \item $|[4]_1|=2, |[4]_2|=2, |[4]_3|=2, |[4]_4|=2$.
  \item $|[4]_1|=3, |[4]_2|=2, |[4]_3|=3, |[4]_4|=2$.
  \item $|[4]_1|=3, |[4]_2|=3, |[4]_3|=3, |[4]_4|=3$.
\end{enumerate}

We need another lemma to facilitate our computations in the next section.

\begin{lemma} \label{lemma2.5}
Let $(\mathbb{R}^4,\ast)$ be a four dimensional genetic Volterra algebra. The sum of each row in any derivation $D=(d_{ij})_{i,j\in [4]}$ is zero. That is, $\sum_{j=1}^4 d_{ij}=0$ for each $i\in [4]$.
\end{lemma}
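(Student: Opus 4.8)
The plan is to evaluate the derivation identity on the idempotents of the basis and exploit the fact that each ${\bf e_i}$ is an idempotent of the algebra. Fix $i\in[4]$. Since ${\bf e_i}\ast{\bf e_i}={\bf e_i}$ by (\ref{equation2}), applying $D$ and using the Leibniz rule gives
\[
D({\bf e_i})=D({\bf e_i}\ast{\bf e_i})=D({\bf e_i})\ast{\bf e_i}+{\bf e_i}\ast D({\bf e_i})=2\,{\bf e_i}\ast D({\bf e_i}),
\]
where the last equality uses commutativity of $(\mathbb{R}^4,\ast)$. So the task reduces to computing ${\bf e_i}\ast D({\bf e_i})$ in coordinates and comparing with $D({\bf e_i})=\sum_{j}d_{ij}{\bf e_j}$.

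Next I would expand the right-hand side. Writing $D({\bf e_i})=\sum_{j=1}^4 d_{ij}{\bf e_j}$ and using ${\bf e_i}\ast{\bf e_i}={\bf e_i}$ together with ${\bf e_i}\ast{\bf e_j}=p_{ij,i}{\bf e_i}+p_{ij,j}{\bf e_j}$ for $j\neq i$, we get
\[
{\bf e_i}\ast D({\bf e_i})=d_{ii}{\bf e_i}+\sum_{j\neq i}d_{ij}\bigl(p_{ij,i}{\bf e_i}+p_{ij,j}{\bf e_j}\bigr)
=\Bigl(d_{ii}+\sum_{j\neq i}d_{ij}p_{ij,i}\Bigr){\bf e_i}+\sum_{j\neq i}d_{ij}p_{ij,j}{\bf e_j}.
\]
Multiplying by $2$ and equating coefficients of ${\bf e_j}$ (for $j\neq i$) in $D({\bf e_i})=2\,{\bf e_i}\ast D({\bf e_i})$ yields $d_{ij}=2d_{ij}p_{ij,j}$, i.e. $d_{ij}(1-2p_{ij,j})=0$; this is consistent with Lemma \ref{lemma1} but is not yet the claim. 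The key move is instead to sum the coordinate identity over all $k\in[4]$. Observe that for a fixed $j$, the vector ${\bf e_i}\ast{\bf e_j}$ has coordinate sum $p_{ij,i}+p_{ij,j}=1$ when $j\neq i$ (by (\ref{equation1})) and coordinate sum $1$ when $j=i$ as well. Hence the coordinate sum of ${\bf e_i}\ast D({\bf e_i})=\sum_j d_{ij}\,({\bf e_i}\ast{\bf e_j})$ equals $\sum_{j=1}^4 d_{ij}$, the same as the coordinate sum of $D({\bf e_i})$.

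Now combine: taking the coordinate sum (the sum of all four entries) on both sides of $D({\bf e_i})=2\,{\bf e_i}\ast D({\bf e_i})$ gives $\sum_{j=1}^4 d_{ij}=2\sum_{j=1}^4 d_{ij}$, so $\sum_{j=1}^4 d_{ij}=0$, which is exactly the assertion of Lemma \ref{lemma2.5}. Since $i\in[4]$ was arbitrary, every row of $D$ sums to zero. I do not anticipate a real obstacle here: the only point that needs care is verifying that $\ast$ preserves coordinate sums on basis products — equivalently that the stochasticity condition $\sum_{k}p_{ij,k}=1$ forces the linear functional ${\bf x}\mapsto\sum_k x_k$ to satisfy $\sum_k({\bf x}\ast{\bf y})_k=(\sum_k x_k)(\sum_k y_k)$ — after which the idempotent trick closes the argument immediately. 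An alternative, essentially equivalent, route is to note that $\sigma({\bf x})=\sum_k x_k$ is an algebra homomorphism $(\mathbb{R}^4,\ast)\to(\mathbb{R},\cdot)$, that $\sigma\circ D$ is then a derivation $(\mathbb{R}^4,\ast)\to\mathbb{R}$ at the point ${\bf e_i}$ in the sense $\sigma(D({\bf e_i}))=2\sigma({\bf e_i})\sigma(D({\bf e_i}))=2\sigma(D({\bf e_i}))$, forcing $\sigma(D({\bf e_i}))=0$; I would likely present the first, more computational version since it keeps everything in explicit coordinates consistent with the rest of Section \ref{section2}.
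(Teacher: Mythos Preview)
Your argument is correct and rests on the same idempotent identity $D({\bf e_i})=2\,{\bf e_i}\ast D({\bf e_i})$ that the paper uses. The only difference is in how the conclusion is extracted: the paper invokes Lemma~\ref{lemma1} to kill the off-diagonal entries $d_{ij}$ with $p_{ij,i}\neq 1/2$, reduces the remaining products to $\tfrac12{\bf e_i}+\tfrac12{\bf e_j}$, and then reads off the ${\bf e_i}$-coefficient; you instead apply the augmentation functional $\sigma({\bf x})=\sum_k x_k$ and use the stochasticity condition $\sum_k p_{ij,k}=1$ directly, so Lemma~\ref{lemma1} is never needed. Your route is slightly cleaner and makes the generalization to $n$-dimensional genetic Volterra algebras (noted right after the lemma) completely transparent, while the paper's version keeps the computation in the same coefficient-comparison style used throughout Section~\ref{section2}.
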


\begin{proof}
Without loss of generality, we will show that $\sum_{j=1}^4 d_{1j}=0$.
\begin{align}
\nonumber D({\bf e_1}) &= D({\bf e_1}\ast {\bf e_1})={\bf e_1}\ast D({\bf e_1})+D({\bf e_1})\ast {\bf e_1}=2{\bf e_1}\ast D({\bf e_1})\\
\nonumber  \sum_{j=1}^4 d_{1j}{\bf e_j}  &= 2{\bf e_1}\ast (\sum_{j=1}^4 d_{1j} {\bf e_j})=2 \sum_{j=1}^4 d_{1j} ({\bf e_1}\ast {\bf e_j})
\end{align}By Lemma \ref{lemma1}, $d_{1j}=0$ if $j\notin [4]_1$, $j\neq i$, $p_{1j,1}\neq 1/2$. Hence,
\begin{align}
\nonumber \sum_{j=1}^4 d_{1j} {\bf e_j} &= 2d_{11}{\bf e_1} +2 \sum_{j\notin [4]_1, j\neq i} d_{1j}(\frac{1}{2}{\bf e_1}+\frac{1}{2}{\bf e_j}).
\end{align}By comparing coefficients of ${\bf e_1}$ on both sides, we get the desired result.
\end{proof}

Lemma \ref{lemma2.5} can easily be generalized to $n$-dimensional genetic Volterra algebras. 

\subsection{Case A} \label{subsection1} By Lemma \ref{lemma1} and (\ref{equation2}), we have the following results: \[ d_{13}=d_{14}=d_{23}=d_{24}=d_{34}=d_{31}=d_{32}=d_{41}=d_{42}=d_{43}=0,\]\[{\bf e_1}\ast{\bf e_2}=\frac{1}{2}{\bf e_1}+\frac{1}{2}{\bf e_2}.\]Let $a=d_{11}$ and $b=d_{21}$. By Lemma \ref{lemma2.5}, $d_{33}=d_{44}=0$. Hence, 
\[
D=(d_{ij})_{i,j\in [4]}=\left( {\begin{array}{cccc}
a & -a & 0 & 0\\
b & -b & 0 & 0\\
0 & 0 & 0 & 0 \\
0& 0 & 0 & 0\\
\end{array}}\right).
\]We compute $D({\bf e_1}\ast {\bf e_3})$ as follows:
\begin{align}
\nonumber D({\bf e_1}\ast {\bf e_3}) &= {\bf e_1}\ast D({\bf e_3})+{\bf e_3}\ast D({\bf e_1}) \\
\nonumber D(p_{13,1} {\bf e_1}+ p_{13,3}{\bf e_3})&={\bf e_3}\ast (a {\bf e_1}-a {\bf e_1}) \\
\nonumber p_{13,1}(a {\bf e_1} -a {\bf e_2}) &= a (p_{13,1} {\bf e_1}+p_{13,3} {\bf e_3})-a (p_{23,2} {\bf e_2}+p_{23,3} {\bf e_3}).
\end{align}By comparing coefficients of ${\bf e_2}$ and ${\bf e_3}$ on both sides, we get 
\begin{align}
\label{equation3} a(p_{13,1}-p_{23,2})&= 0\\
\label{equation4} a(p_{13,3}-p_{23,3})&= 0.
\end{align}We do a similar computation on $D({\bf e_2}\ast {\bf e_3})$ to get 
\begin{align}
\label{equation5} b(p_{13,1}-p_{23,2}) &= 0\\
\label{equation6} b(p_{13,3}-p_{23,3}) &= 0.
\end{align}One of $a,b$ is non-zero otherwise $D$ would be a trivial derivation. By (\ref{equation3}), (\ref{equation4}), (\ref{equation5}), (\ref{equation6}), $p_{13,1}=p_{23,2},\mbox{  } p_{13,3}=p_{23,3}$. We do the same computations on $D({\bf e_1}\ast {\bf e_4})$ and $D({\bf e_2}\ast {\bf e_4})$ to get $p_{14,1}=p_{24,2}, \mbox{  }p_{14,4}=p_{24,4}$. We summarize our results: 

\begin{theorem} \label{caseA}
Let $(\mathbb{R}^4,\ast)$ be a four dimensional genetic Volterra algebra. In Case A: $|[4]_1|=1, |[4]_2|=1, |[4]_3|=0, |[4]_4|=0$, derivations are in the form 
\[
D=(d_{ij})_{i,j\in [4]}=\left( {\begin{array}{cccc}
a & -a & 0 & 0\\
b & -b & 0 & 0\\
0 & 0 & 0 & 0 \\
0& 0 & 0 & 0\\
\end{array}}\right).
\]If $D$ is non-trivial (one of $a,b$ is non-zero), then $p_{13,1}=p_{23,2}$, $p_{13,3}=p_{23,3}$, $p_{14,1}=p_{24,2}$, $ p_{14,4}=p_{24,4}$.
\end{theorem}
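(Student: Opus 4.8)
The plan is to combine the two structural lemmas to fix the shape of the matrix of $D$, and then to extract the relations among the structure constants by evaluating the derivation identity $D(u\ast v)=D(u)\ast v+u\ast D(v)$ on suitably chosen pairs of basis vectors.

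First I would record what the case hypothesis gives: up to a permutation of the basis we may take $[4]_1=\{2\}$, $[4]_2=\{1\}$, and $[4]_3=[4]_4=\emptyset$, so that $p_{12,1}=p_{12,2}=1/2$ and ${\bf e_1}\ast{\bf e_2}=\frac{1}{2}{\bf e_1}+\frac{1}{2}{\bf e_2}$. Applying Lemma \ref{lemma1} to every pair $\{i,j\}$ with $i\neq j$ and $j\notin[4]_i$ kills all off-diagonal entries except $d_{12}$ and $d_{21}$, so the only possibly nonzero entries of $D$ are $d_{11},d_{12},d_{21},d_{22},d_{33},d_{44}$. Then Lemma \ref{lemma2.5} (each row of $D$ sums to zero) gives $d_{12}=-d_{11}$ and $d_{22}=-d_{21}$ from rows $1$ and $2$, and $d_{33}=d_{44}=0$ from rows $3$ and $4$, which contain only a diagonal entry. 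Writing $a=d_{11}$ and $b=d_{21}$ yields exactly the claimed matrix.

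Second, to obtain the constraints on the $p_{ij,k}$ I would substitute the products ${\bf e_1}\ast{\bf e_3}$, ${\bf e_2}\ast{\bf e_3}$, ${\bf e_1}\ast{\bf e_4}$, ${\bf e_2}\ast{\bf e_4}$ into the derivation identity. Since $D({\bf e_3})=D({\bf e_4})=0$, each identity collapses; for instance $D({\bf e_1}\ast{\bf e_3})={\bf e_3}\ast D({\bf e_1})$, and expanding the left side by linearity of $D$ and the right side by (\ref{equation2}), then comparing coefficients of ${\bf e_2}$ and ${\bf e_3}$, produces $a(p_{13,1}-p_{23,2})=0$ and $a(p_{13,3}-p_{23,3})=0$. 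The product ${\bf e_2}\ast{\bf e_3}$ gives the same equations with $b$ in place of $a$, and the ${\bf e_4}$-products give the analogous equations in $p_{14,1},p_{14,4},p_{24,2},p_{24,4}$. Since $D$ non-trivial means $a\neq0$ or $b\neq0$, dividing out a nonzero scalar gives $p_{13,1}=p_{23,2}$, $p_{13,3}=p_{23,3}$, $p_{14,1}=p_{24,2}$, $p_{14,4}=p_{24,4}$, as stated.

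There is no serious obstacle here; the content is bookkeeping. The only points needing care are keeping track of which coefficient comparisons are automatically satisfied (the ${\bf e_1}$-coefficients always match) versus which give genuine relations, and noting that $p_{ij,i}+p_{ij,j}=1$ makes the two relations coming from each product equivalent, so really one independent condition per product survives. One could additionally check that the remaining products ${\bf e_i}\ast{\bf e_i}$, ${\bf e_1}\ast{\bf e_2}$, ${\bf e_3}\ast{\bf e_4}$ impose nothing further, which would upgrade the statement to a full characterization, but as phrased the theorem only asserts necessity.
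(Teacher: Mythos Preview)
Your proposal is correct and follows essentially the same route as the paper: fix the matrix shape via Lemma~\ref{lemma1} and Lemma~\ref{lemma2.5}, then apply the derivation identity to the products ${\bf e_i}\ast{\bf e_3}$ and ${\bf e_i}\ast{\bf e_4}$ for $i=1,2$ and compare coefficients. Your added remarks about which coefficient comparisons are automatic and about the redundancy coming from $p_{ij,i}+p_{ij,j}=1$ are accurate and go slightly beyond what the paper spells out.
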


\subsection{Case B} \label{subsection2} By Lemma \ref{lemma1} and (\ref{equation2}), we have the following results: \[d_{13}=d_{31}=d_{14}=d_{41}=d_{23}=d_{32}=d_{24}=d_{42}=0,\]\[{\bf e_1}\ast {\bf e_2}=\frac{1}{2} {\bf e_1}+\frac{1}{2} {\bf e_2},\mbox{  }{\bf e_3}\ast {\bf e_4}=\frac{1}{2}{\bf e_3}+\frac{1}{2}{\bf e_4}.\]Let $a=d_{11}$, $b=d_{21}$, $c=d_{33}$ and $d=d_{43}$. By Lemma \ref{lemma2.5}, 
\[
D=(d_{ij})_{i,j\in [4]}=\left( {\begin{array}{cccc}
a & -a & 0 & 0\\
b & -b & 0 & 0\\
0 & 0 & c & -c \\
0& 0 & d & -d\\
\end{array}}\right).
\]For $D({\bf e_1}\ast {\bf e_3})$, we compute it in two ways. First,
\begin{align}
\nonumber D({\bf e_1}\ast {\bf e_3})&= D(p_{13,1}{\bf e_1}+p_{13,3} {\bf e_3}) \\
\label{equation6.1}  &= p_{13,1} (a{\bf e_1}-a{\bf e_2}) +p_{13,3} (c{\bf e_3}-c{\bf e_4}).
\end{align}Alternatively, 
\begin{align}
\nonumber D({\bf e_1}\ast {\bf e_3})&={\bf e_1}\ast D({\bf e_3}) +{\bf e_3}\ast D({\bf e_1})\\
\nonumber &= {\bf e_1}\ast (c{\bf e_3}-c{\bf e_4})+{\bf e_3}\ast (a{\bf e_1}-a {\bf e_2}) \\
 \label{equation6.2} &=\begin{aligned}[t]
            &  c(p_{13,1}{\bf e_1}+p_{13,3}{\bf e_3})-c(p_{14,1} {\bf e_1}+p_{14,4}{\bf e_4})\\
            &  +a(p_{13,1}{\bf e_1}+p_{13,3} {\bf e_3})-a(p_{23,2} {\bf e_2}+p_{23,3} {\bf e_3}).
        \end{aligned}
\end{align}By comparing coefficients of ${\bf e_1}$, ${\bf e_2}$, ${\bf e_3}$ and ${\bf e_4}$ in (\ref{equation6.1}) and (\ref{equation6.2}) respectively, 
\begin{align}
\label{equation7} c(p_{13,1}-p_{14,1}) &= 0, \\
\label{equation8} a(p_{13,1}-p_{23,2}) &= 0,\\
\label{equation9} a(p_{13,3}-p_{23,3})&=0,\\
\label{equation10} c (p_{13,3}-p_{14,4}) &= 0.
\end{align}We do the same computation for $D({\bf e_1}\ast {\bf e_4})$ to get 
\begin{align}
\label{equation11} d(p_{13,1}-p_{14,1}) &= 0,\\
\label{equation12} a(p_{14,1}-p_{24,2}) &= 0, \\
\label{equation13} d(p_{13,3}-p_{14,4}) &= 0,\\
\label{equation14} a(p_{14,4}-p_{24,4}) &= 0.
\end{align}We do the same computation for $D({\bf e_2}\ast {\bf e_3})$ to get
\begin{align}
\label{equation15} b(p_{13,1} -p_{23,2}) &= 0, \\
\label{equation16} c(p_{23,2} -p_{24,2}) &= 0, \\
\label{equation17} b(p_{13,3} -p_{23,3}) &= 0, \\
\label{equation18} c(p_{23,3} -p_{24,4}) &= 0.
\end{align}Similarly, we do the same computation for $D({\bf e_2}\ast {\bf e_4})$ to get
\begin{align}
\label{equation19} b(p_{14,1} -p_{24,2}) &= 0,\\
\label{equation20} d(p_{23,2}-p_{24,2}) &= 0,\\
\label{equation21} d(p_{23,2} -p_{24,4}) &= 0,\\
\label{equation22} b(p_{14,4}-p_{24,4}) &= 0.
\end{align} If both $a, b$ are zeros, then the derivation $D$ is reduced to the derivation appeared in Theorem \ref{caseA}. The same thing can be said for the case when both $c,d$ are zeros.

If one of $a,b$ is non-zero and one of $c,d$ is non-zero, then $p_{13,1}=p_{14,1}$, $p_{13,3}=p_{14,4}$ by (\ref{equation7}), (\ref{equation10}), (\ref{equation11}), (\ref{equation13}); $p_{13,1}=p_{23,2}$, $p_{13,3}=p_{23,3}$ by (\ref{equation8}), (\ref{equation9}), (\ref{equation15}), (\ref{equation17}); $p_{14,1}=p_{24,2}$, $p_{14,4}=p_{24,4}$ by (\ref{equation12}), (\ref{equation14}), (\ref{equation19}), (\ref{equation22}); $p_{23,2}=p_{24,2}$, $p_{23,3}=p_{24,4}$ by (\ref{equation16}), (\ref{equation18}), (\ref{equation20}), (\ref{equation21}). We summarize our results: 

\begin{theorem} \label{caseB}
Let $(\mathbb{R}^4,\ast)$ be a four dimensional genetic Volterra algebra. In Case B: $|[4]_1|=1, |[4]_2|=1, |[4]_3|=1, |[4]_4|=1$, derivations are in the form 
\[
D=(d_{ij})_{i,j\in [4]}=\left( {\begin{array}{cccc}
a & -a & 0 & 0\\
b & -b & 0 & 0\\
0 & 0 & c & -c \\
0& 0 & d & -d\\
\end{array}}\right).
\]If $D$ is non-trivial and is not the same as the derivations appeared in Theorem \ref{caseA} (that is, one of $a,b$ is non-zero and one of $c,d$ is non-zero), then $p_{13,1}=p_{14,1}=p_{23,2}=p_{24,2}$ and $p_{13,3}=p_{14,4}=p_{23,3}=p_{24,4}$.
\end{theorem}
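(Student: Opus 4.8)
\textbf{Proof proposal for Theorem \ref{caseB}.}

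The plan is to follow exactly the template established in Case A, but now with two independent $2\times 2$ blocks interacting through the cross products ${\bf e_i}\ast{\bf e_j}$ with $i\in\{1,2\}$ and $j\in\{3,4\}$. First I would invoke Lemma \ref{lemma1} to kill all the off-block entries $d_{13},d_{31},d_{14},d_{41},d_{23},d_{32},d_{24},d_{42}$, since for these pairs we are not in the relevant $[4]_i$. Then Lemma \ref{lemma2.5} forces each row to sum to zero, which pins down the matrix to the displayed four-parameter form with parameters $a=d_{11}$, $b=d_{21}$, $c=d_{33}$, $d=d_{43}$. At this point the structural shape of $D$ is completely determined and only the constraints on the $p_{ij,k}$'s remain to be extracted.

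The core of the argument is to compute $D({\bf e_i}\ast{\bf e_j})$ for each of the four cross pairs $(i,j)\in\{(1,3),(1,4),(2,3),(2,4)\}$ in two ways: once by first expanding ${\bf e_i}\ast{\bf e_j}=p_{ij,i}{\bf e_i}+p_{ij,j}{\bf e_j}$ via \eqref{equation2} and then applying $D$, and once by using the Leibniz rule $D({\bf e_i}\ast{\bf e_j})={\bf e_i}\ast D({\bf e_j})+{\bf e_j}\ast D({\bf e_i})$ and expanding the resulting products. Each pair yields four scalar equations by comparing the coefficients of ${\bf e_1},{\bf e_2},{\bf e_3},{\bf e_4}$; this gives the sixteen relations \eqref{equation7}--\eqref{equation22}. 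These are all routine bilinear expansions, entirely parallel to the Case A computation, so I would present one of them (say the $(1,3)$ computation, already done as \eqref{equation6.1}--\eqref{equation6.2}) in detail and simply state the analogous outputs for the other three pairs.

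The final step is the bookkeeping: under the hypothesis that one of $a,b$ is nonzero and one of $c,d$ is nonzero, each of the coefficient identities $p_{13,1}-p_{14,1}$, $p_{13,1}-p_{23,2}$, etc., is multiplied by a nonzero scalar in at least one of the sixteen equations, hence must vanish. Collecting these: equations \eqref{equation7},\eqref{equation10},\eqref{equation11},\eqref{equation13} give $p_{13,1}=p_{14,1}$ and $p_{13,3}=p_{14,4}$; equations \eqref{equation8},\eqref{equation9},\eqref{equation15},\eqref{equation17} give $p_{13,1}=p_{23,2}$ and $p_{13,3}=p_{23,3}$; equations \eqref{equation12},\eqref{equation14},\eqref{equation19},\eqref{equation22} give $p_{14,1}=p_{24,2}$ and $p_{14,4}=p_{24,4}$; equations \eqref{equation16},\eqref{equation18},\eqref{equation20},\eqref{equation21} give $p_{23,2}=p_{24,2}$ and $p_{23,3}=p_{24,4}$. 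Chaining these equalities yields $p_{13,1}=p_{14,1}=p_{23,2}=p_{24,2}$ and $p_{13,3}=p_{14,4}=p_{23,3}=p_{24,4}$, which is the claim. The only genuine subtlety — the ``hard part'' — is making sure that the case split is exhaustive: the cases where $a=b=0$ or $c=d=0$ degenerate to the Case A form (or to the trivial derivation), which must be noted so that the stated hypothesis ``one of $a,b$ nonzero and one of $c,d$ nonzero'' is precisely the complementary regime; everything else is mechanical coefficient comparison.
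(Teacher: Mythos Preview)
Your proposal is correct and follows essentially the same approach as the paper: Lemma~\ref{lemma1} and Lemma~\ref{lemma2.5} pin down the block form, then the four cross products $D({\bf e_i}\ast{\bf e_j})$ with $i\in\{1,2\}$, $j\in\{3,4\}$ are each expanded two ways to produce the sixteen relations \eqref{equation7}--\eqref{equation22}, and the case split on which of $a,b,c,d$ vanish is handled exactly as you describe. Your grouping of which equations yield which equalities of the $p_{ij,k}$'s matches the paper's line by line.
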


\subsection{Case C} \label{subsection3} By Lemma \ref{lemma1} and (\ref{equation2}), we have the following results:  
\[d_{14}=d_{41}=d_{23}=d_{32}=d_{24}=d_{42}=d_{34}=d_{43}=0,\]\[{\bf e_1}\ast {\bf e_2}=\frac{1}{2} {\bf e_1}+\frac{1}{2} {\bf e_2},\mbox{  }{\bf e_1}\ast {\bf e_3}=\frac{1}{2}{\bf e_1}+\frac{1}{2}{\bf e_3}.\]Let $c=d_{11}$, $d=d_{12}$, $b=d_{21}$, $a=d_{31}$. By Lemma \ref{lemma2.5}, we have: 
\[
D=(d_{ij})_{i,j\in [4]}=\left( {\begin{array}{cccc}
c & d & -(c+d) & 0\\
b & -b & 0 & 0\\
a & 0 & -a & 0 \\
0& 0 & 0 & 0\\
\end{array}}\right).
\]For $D({\bf e_2}\ast {\bf e_3})$, we compute it in two ways. First,
\begin{align}
\nonumber D({\bf e_2}\ast {\bf e_3}) &= D(p_{23,2}{\bf e_2}+p_{23,3}{\bf e_3})=p_{23,2} D({\bf e_2})+p_{23,3} D({\bf e_3}) \\
\label{equation23}  &= p_{23,2} (b{\bf e_1}-b{\bf e_2})+p_{23,3} (a {\bf e_1}-a{\bf e_3}).
\end{align}Alternatively,
\begin{align}
\nonumber D({\bf e_2}\ast {\bf e_3}) &= {\bf e_2}\ast D({\bf e_3})+{\bf e_3}\ast D({\bf e_2}) \\
\nonumber  &= {\bf e_2}\ast (a{\bf e_1}-a{\bf e_3})+{\bf e_3}\ast (b{\bf e_1}-b{\bf e_2}) \\
   &=\begin{aligned}[t]
\label{equation24}       a (\frac{1}{2} {\bf e_1}+\frac{1}{2} {\bf e_2})-a(p_{23,2}{\bf e_2}+p_{23,3}{\bf e_3})+b(\frac{1}{2}{\bf e_1}+\frac{1}{2}{\bf e_3})\\
            -b(p_{23,2}{\bf e_2}+p_{23,3} {\bf e_3}).
        \end{aligned}
\end{align}By comparing coefficients of ${\bf e_2}$ and ${\bf e_3}$ in (\ref{equation23}) and (\ref{equation24}), we get 
\begin{align}
\label{equation25} a(\frac{1}{2} - p_{23,2})&= 0 \\
\label{equation26} b(\frac{1}{2} -p_{23,3})&= 0.
\end{align}Since $p_{23,2}\neq 1/2$ and $p_{23,3}\neq 1/2$, we get $a=0$ and $b=0$ in (\ref{equation25}), (\ref{equation26}). Hence, $D({\bf e_2})=D({\bf e_3})=0$. For $D({\bf e_1}\ast {\bf e_3})$, we compute it in two ways. First,
\begin{align}
\label{equation27} D({\bf e_1}\ast {\bf e_3}) &= \frac{1}{2} D({\bf e_1})+\frac{1}{2} D({\bf e_3})=\frac{1}{2}c{\bf e_1}+\frac{1}{2}d{\bf e_2}-\frac{1}{2}(c+d){\bf e_3}.
\end{align}Alternatively, 
\begin{align}
\nonumber D({\bf e_1}\ast {\bf e_3}) &= {\bf e_1}\ast D({\bf e_3})+{\bf e_3}\ast D({\bf e_1}) \\
\nonumber &={\bf e_3} \ast (c{\bf e_1}+d{\bf e_2}-(c+d) {\bf e_3}) \\
\label{equation28} &= c(\frac{1}{2}{\bf e_1}+\frac{1}{2}{\bf e_3})+d(p_{23,2}{\bf e_2} +p_{23,3} {\bf e_3})-(c+d){\bf e_3}.
\end{align}By comparing coefficients of ${\bf e_2}$ in (\ref{equation27}) and (\ref{equation28}), we get 
\begin{align}
\label{equation29} d(\frac{1}{2}-p_{23,2}) &= 0.
\end{align}Since $p_{23,2}\neq 1/2$, $d=0$ in (\ref{equation29}). Similarly, we do the same computation on $D({\bf e_1}\ast {\bf e_2})$, we get $c=0$. We summarize our results:

\begin{theorem} \label{caseC}
Let $(\mathbb{R}^4,\ast)$ be a four dimensional genetic Volterra algebra. In Case C: $|[4]_1|=2, |[4]_2|=1, |[4]_3|=1, |[4]_4|=0$, all derivations are trivial.
\end{theorem}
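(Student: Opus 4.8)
The plan is to first pin down the shape of $D$ from the two lemmas, and then eliminate the four remaining parameters one block at a time by evaluating $D$ on suitable products in two ways. In Case C the sets $[4]_i$ are completely determined: since $|[4]_4|=0$ the index $4$ lies in no $[4]_i$, so $|[4]_1|=2$ forces $[4]_1=\{2,3\}$, and then $1\in[4]_2\cap[4]_3$ together with $|[4]_2|=|[4]_3|=1$ forces $[4]_2=[4]_3=\{1\}$. In particular ${\bf e_1}\ast{\bf e_2}=\frac12{\bf e_1}+\frac12{\bf e_2}$ and ${\bf e_1}\ast{\bf e_3}=\frac12{\bf e_1}+\frac12{\bf e_3}$, while in the products ${\bf e_2}\ast{\bf e_3}$ and ${\bf e_i}\ast{\bf e_4}$ none of the structure constants equals $\frac12$; this non-degeneracy is the engine of the whole argument. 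Lemma~\ref{lemma1} applied to the pairs $(1,4),(2,3),(2,4),(3,4)$ kills $d_{14},d_{41},d_{23},d_{32},d_{24},d_{42},d_{34},d_{43}$, and Lemma~\ref{lemma2.5} then reduces $D$ to the displayed matrix with free parameters $c=d_{11}$, $d=d_{12}$, $b=d_{21}$, $a=d_{31}$.

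Next I would compute $D({\bf e_2}\ast{\bf e_3})$ in the two standard ways --- once as $p_{23,2}D({\bf e_2})+p_{23,3}D({\bf e_3})$, once via the Leibniz rule ${\bf e_2}\ast D({\bf e_3})+{\bf e_3}\ast D({\bf e_2})$, expanding with ${\bf e_2}\ast{\bf e_1}=\frac12{\bf e_1}+\frac12{\bf e_2}$ and ${\bf e_3}\ast{\bf e_1}=\frac12{\bf e_1}+\frac12{\bf e_3}$ --- and compare coefficients. The ${\bf e_2}$-coefficient yields $a(\frac12-p_{23,2})=0$ and the ${\bf e_3}$-coefficient yields $b(\frac12-p_{23,3})=0$; since $p_{23,2}\neq\frac12$ and $p_{23,3}\neq\frac12$ this forces $a=b=0$, so that $D({\bf e_2})=D({\bf e_3})=0$ and only the first row of $D$ survives. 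With the third row now zero, computing $D({\bf e_1}\ast{\bf e_3})$ both ways and reading off the ${\bf e_2}$-coefficient gives $d(\frac12-p_{23,2})=0$, hence $d=0$; the same computation with $D({\bf e_1}\ast{\bf e_2})$ gives $c(\frac12-p_{23,2})=0$, hence $c=0$. Therefore $D=0$.

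The argument carries no real obstacle --- it is finite bookkeeping --- but two points require care. One must eliminate $a$ and $b$ (equivalently, the third row) \emph{before} analysing $D({\bf e_1}\ast{\bf e_3})$ and $D({\bf e_1}\ast{\bf e_2})$, since otherwise those identities involve all four parameters and no clean conclusion emerges. And in each comparison one must single out the correct basis vector: it is the ${\bf e_2}$-coefficient (respectively the ${\bf e_2}$- and ${\bf e_3}$-coefficients in the first step), not the ${\bf e_1}$-coefficient, that isolates a single parameter multiplied by a factor $\frac12-p_{23,\cdot}$, and it is precisely the case hypothesis $|[4]_2|=|[4]_3|=1$ --- i.e.\ $p_{23,2},p_{23,3}\neq\frac12$ --- that then makes this factor nonzero and forces the parameter to vanish.
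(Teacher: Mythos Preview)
Your argument is correct and follows essentially the same route as the paper: reduce $D$ to four parameters via Lemma~\ref{lemma1} and Lemma~\ref{lemma2.5}, kill $a,b$ from the ${\bf e_2}$- and ${\bf e_3}$-coefficients of $D({\bf e_2}\ast{\bf e_3})$, and then kill $d$ and $c$ from $D({\bf e_1}\ast{\bf e_3})$ and $D({\bf e_1}\ast{\bf e_2})$. One small remark: your claim that $a,b$ must be eliminated \emph{before} analysing $D({\bf e_1}\ast{\bf e_3})$ is slightly overstated --- the ${\bf e_2}$-coefficient there already isolates $d(\tfrac12-p_{23,2})$ regardless of $a$ --- but this does not affect the validity of the proof.
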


\subsection{Case D} \label{subsection4} By Lemma \ref{lemma1} and (\ref{equation2}), we have the following results: \[d_{14}=d_{24}=d_{34}=d_{41}=d_{42}=d_{43}=0,\]\[{\bf e_1}\ast {\bf e_2}=\frac{1}{2}{\bf e_1}+\frac{1}{2}{\bf e_2}, \quad {\bf e_2}\ast {\bf e_3}=\frac{1}{2}{\bf e_2}+\frac{1}{2}{\bf e_3},\quad {\bf e_1}\ast {\bf e_3}=\frac{1}{2}{\bf e_1}+\frac{1}{2}{\bf e_3}.\]By Lemma \ref{lemma2.5}, $d_{44}=0$ and hence $D({\bf e_4})=0$. Let $a=d_{11}$, $b=d_{12}$, $c=d_{21}$, $d=d_{22}$, $e=d_{31}$, $f=d_{32}$. By Lemma \ref{lemma2.5}, we summarize our results:

\begin{theorem} \label{caseD}
Let $(\mathbb{R}^4,\ast)$ be a four dimensional genetic Volterra algebra. In Case D: $|[4]_1|=2, |[4]_2|=2, |[4]_3|=2, |[4]_4|=0$, derivations are in the form 
\[
D=(d_{ij})_{i,j\in [4]}=\left( {\begin{array}{cccc}
a & b & -(a+b) & 0\\
c & d & -(c+d) & 0\\
e & f & -(e+f) & 0 \\
0& 0 & 0 & 0\\
\end{array}}\right).
\]
\end{theorem}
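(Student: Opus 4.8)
The plan is to read the matrix of $D$ off from Lemma \ref{lemma1} and Lemma \ref{lemma2.5} almost at once, after first recording the combinatorial shape of Case D. Since $p_{ii,i}=1\neq 1/2$ we always have $i\notin[4]_i$, and since the relation ``$j\in[4]_i$'' is symmetric (because $p_{ij,i}=1/2\iff p_{ij,j}=1/2$ by (\ref{equation1}), and $p_{ij,k}=p_{ji,k}$), the hypothesis $|[4]_4|=0$ forces $4\notin[4]_i$ for every $i$. Combined with the cardinalities $|[4]_1|=|[4]_2|=|[4]_3|=2$, this pins down $[4]_1=\{2,3\}$, $[4]_2=\{1,3\}$, $[4]_3=\{1,2\}$ and $[4]_4=\emptyset$. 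Hence, by (\ref{equation2}), ${\bf e_i}\ast{\bf e_j}=\frac{1}{2}{\bf e_i}+\frac{1}{2}{\bf e_j}$ for all distinct $i,j\in\{1,2,3\}$, while the only products of distinct basis vectors with a coefficient $\neq 1/2$ are ${\bf e_i}\ast{\bf e_4}=p_{i4,i}{\bf e_i}+p_{i4,4}{\bf e_4}$ for $i\in\{1,2,3\}$.

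Next I would apply the two lemmas. By Lemma \ref{lemma1}, applied with $i\in\{1,2,3\}$ and $j=4$ (which lies outside $[4]_i$), all six entries $d_{14},d_{41},d_{24},d_{42},d_{34},d_{43}$ vanish. By Lemma \ref{lemma2.5} every row of $D$ sums to zero; the fourth row then gives $d_{44}=-(d_{41}+d_{42}+d_{43})=0$, so $D({\bf e_4})=0$, and the first three rows give $d_{13}=-(d_{11}+d_{12})$, $d_{23}=-(d_{21}+d_{22})$, $d_{33}=-(d_{31}+d_{32})$. Renaming $a=d_{11}$, $b=d_{12}$, $c=d_{21}$, $d=d_{22}$, $e=d_{31}$, $f=d_{32}$, the matrix of $D$ is exactly the one in the statement, which proves the theorem.

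It then remains to confirm that this shape is actually consistent with the derivation identity on every product (this is the part the later sections use, and the only place a computation occurs). Expanding $D({\bf e_i}\ast{\bf e_j})={\bf e_i}\ast D({\bf e_j})+{\bf e_j}\ast D({\bf e_i})$ for distinct $i,j\in\{1,2,3\}$ produces an identity in $a,\dots,f$ that holds automatically: the restriction of $D$ to the subspace spanned by ${\bf e_1},{\bf e_2},{\bf e_3}$ is a row-sum-zero endomorphism of the ``all $1/2$'' three dimensional genetic Volterra subalgebra, and for that subalgebra one checks directly that ${\bf e_i}\ast D({\bf e_j})=\frac{1}{2}D({\bf e_j})$ whenever $i\neq j$, so every such endomorphism is a derivation there. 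Using $D({\bf e_4})=0$, the remaining identities $D({\bf e_i}\ast{\bf e_4})={\bf e_4}\ast D({\bf e_i})$ reduce to $p_{i4,i}\,D({\bf e_i})={\bf e_4}\ast D({\bf e_i})$; comparing coefficients of ${\bf e_1},{\bf e_2},{\bf e_3}$ yields relations of the type $d_{ij}(p_{i4,i}-p_{j4,j})=0$, in the same spirit as Cases A and B, and the ${\bf e_4}$-coefficients then match automatically because the row sum of $D({\bf e_i})$ is zero. Thus these identities constrain the structure constants, not the form of $D$. There is no genuine obstacle in Case D — it is the most generic of the ten configurations — the only care needed being the bookkeeping of which coefficient comparisons among the ${\bf e_i}\ast{\bf e_4}$ are independent.
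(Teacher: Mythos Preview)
Your argument is correct and follows the same route as the paper: the proof in the paper is exactly your first two paragraphs compressed to a few lines --- apply Lemma~\ref{lemma1} to kill the six entries in the fourth row and column, then Lemma~\ref{lemma2.5} to force $d_{44}=0$ and to express the third column of the upper $3\times 3$ block in terms of the first two.

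Your third paragraph goes beyond what Theorem~\ref{caseD} actually asserts. The theorem, like its companions in Cases A, B, G, I, J, only records the \emph{necessary} shape of a derivation; it does not claim every matrix of that shape is a derivation, and the paper performs no consistency check here. Your verification is correct and in fact recovers the missing companion clause that the paper states in Cases A, B, G but omits in Case D: if some $d_{ij}\neq 0$ with $i,j\in\{1,2,3\}$, then $p_{i4,i}=p_{j4,j}$. So this extra paragraph is not needed for the theorem as written, but it is a genuine (and useful) addition, not a mistake.
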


\subsection{Case E} \label{subsection5} By Lemma \ref{lemma1} and (\ref{equation2}), we have the following results: \[d_{23}=d_{32}=d_{24}=d_{42}=d_{34}=d_{43}=0,\]\[{\bf e_1}\ast {\bf e_2}=\frac{1}{2}{\bf e_1}+\frac{1}{2}{\bf e_2}, \quad {\bf e_1}\ast {\bf e_3}=\frac{1}{2}{\bf e_1}+\frac{1}{2}{\bf e_3},\quad {\bf e_1}\ast {\bf e_4}=\frac{1}{2}{\bf e_1}+\frac{1}{2}{\bf e_4}.\]Let $d=d_{11}$, $e=d_{12}$, $f=d_{13}$, $a=d_{21}$, $b=d_{31}$, $c=d_{41}$. By Lemma \ref{lemma2.5},
\[
D=(d_{ij})_{i,j\in [4]}=\left( {\begin{array}{cccc}
d & e & f & -(d+e+f)\\
a & -a & 0 & 0\\
b & 0 & -b & 0 \\
c & 0 & 0 & -c\\
\end{array}}\right).
\]We compute $D({\bf e_2}\ast {\bf e_3})$ in two ways. First, 
\begin{align}
\nonumber D({\bf e_2}\ast {\bf e_3}) &= p_{23,2} D({\bf e_2}) +p_{23,3} D({\bf e_3}) \\
\label{equation29.3}  &= p_{23,2}(a{\bf e_1}-a{\bf e_2})+p_{23,3}(b{\bf e_1}-b{\bf e_3}). 
\end{align}Alternatively,
\begin{align}
\nonumber D({\bf e_2}\ast {\bf e_3})&= {\bf e_2}\ast D({\bf e_3}) +{\bf e_3}\ast D({\bf e_2}) \\
\nonumber  &= {\bf e_2}\ast (b{\bf e_1}-b{\bf e_3})+{\bf e_3}\ast (a{\bf e_1}-a{\bf e_2}) \\
\label{equation29.4}  &=\begin{aligned}[t]
   b(\frac{1}{2}{\bf e_1}+\frac{1}{2}{\bf e_2})-b(p_{23,2}{\bf e_2}+p_{23,3}{\bf e_3})+a(\frac{1}{2}{\bf e_1}+\frac{1}{2}{\bf e_3})  \\
            -a (p_{23,2}{\bf e_2}+p_{23,3}{\bf e_3}).
        \end{aligned}
\end{align}By comparing coefficients of ${\bf e_1}$, ${\bf e_2}$, we get 
\begin{align}
\label{equation29.5} (a-b)(\frac{1}{2}-p_{23,2})=0, \\
\label{equation29.6}  b(\frac{1}{2}-p_{23,2}) =0. 
\end{align}Since $p_{23,2}\neq 1/2$, we get $a=b=0$ by (\ref{equation29.5}), (\ref{equation29.6}). We do a similar computation on $D({\bf e_2}\ast {\bf e_4})$ to get $c=0$. Hence, \[D({\bf e_2})=D({\bf e_3})=D({\bf e_4})=0.\]Next, we compute $D({\bf e_1}\ast {\bf e_2})$ in two ways. First,
\begin{align}
\nonumber D({\bf e_1}\ast {\bf e_2}) &= \frac{1}{2} D({\bf e_1}) +\frac{1}{2}D({\bf e_2})= \frac{1}{2}D({\bf e_1}) \\
\label{equation29.7}  &=\frac{1}{2} (d {\bf e_1} +e {\bf e_2} +f {\bf e_3} -(d+e+f) {\bf e_4}).
\end{align}Alternatively, 
\begin{align}
\nonumber D({\bf e_1}\ast {\bf e_2}) &= {\bf e_1}\ast D({\bf e_2}) +{\bf e_2}\ast D({\bf e_1})= {\bf e_1}\ast D({\bf e_1})\\
\nonumber &= {\bf e_2}\ast (d {\bf e_1}+e {\bf e_2}+f {\bf e_3} -(d+e+f) {\bf e_4}) \\
\label{equation29.8} &=\begin{aligned}[t]
   d(\frac{1}{2}{\bf e_1}+\frac{1}{2}{\bf e_2})+e{\bf e_2}+f(p_{23,2}{\bf e_2}+p_{23,3} {\bf e_3})  \\
           -(d+e+f)(p_{24,2}{\bf e_2} +p_{24,4}{\bf e_4}).
        \end{aligned}
\end{align}By comparing coefficients of ${\bf e_3}$, ${\bf e_4}$ in (\ref{equation29.7}), (\ref{equation29.8}), 
\begin{align}
\label{equation29.9}  f(\frac{1}{2}-p_{23,3}) =0,  \\
\label{equation29.91}  (d+e)(\frac{1}{2} -p_{24,4}) =0. 
\end{align}Since $p_{23,3}\neq 1/2$, $p_{24,4}\neq 1/2$, we get $f=0$, $d+e=0$ in (\ref{equation29.9}), (\ref{equation29.91}). Hence, \[D({\bf e_1})=d{\bf e_1}-d{\bf e_2}.\]We compute $D({\bf e_1}\ast {\bf e_3})$ in two ways. First,
\begin{align}
\label{equation29.92} D({\bf e_1}\ast {\bf e_3}) &=\frac{1}{2}D({\bf e_1})+\frac{1}{2}D({\bf e_3})= \frac{1}{2}D({\bf e_1})=\frac{1}{2}(d{\bf e_1}-d{\bf e_2}).
\end{align}Alternatively,
\begin{align}
\nonumber D({\bf e_1}\ast {\bf e_3}) &= {\bf e_3}\ast D({\bf e_1})={\bf e_3}\ast (d{\bf e_1}-d{\bf e_2})\\
\label{equation29.93} &= d(\frac{1}{2}{\bf e_1}+\frac{1}{2}{\bf e_3})-d(p_{23,2}{\bf e_2}+p_{23,3}{\bf e_3}).
\end{align}By comparing coefficients of ${\bf e_2}$ in (\ref{equation29.92}), (\ref{equation29.93}), we get 
\begin{align}
\label{equation29.94} d(\frac{1}{2}-p_{23,2}) =0. 
\end{align}Since $p_{23,2}\neq 1/2$, we get $d=0$. We summarize our results:

\begin{theorem} \label{caseE}
Let $(\mathbb{R}^4,\ast)$ be a four dimensional genetic Volterra algebra. In Case E: $|[4]_1|=3, |[4]_2|=1, |[4]_3|=1, |[4]_4|=1$, all derivations are trivial.
\end{theorem}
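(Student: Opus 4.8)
The plan is to determine the matrix $(d_{ij})_{i,j\in[4]}$ entry by entry by expanding each product ${\bf e_i}\ast{\bf e_j}$ in two ways --- once as $D$ applied to the combination $p_{ij,i}{\bf e_i}+p_{ij,j}{\bf e_j}$, and once through the Leibniz rule $D({\bf e_i})\ast{\bf e_j}+{\bf e_i}\ast D({\bf e_j})$ --- and then comparing coefficients in the standard basis. The initial reduction is pure bookkeeping: in Case E one has $[4]_1=\{2,3,4\}$ and $[4]_2=[4]_3=[4]_4=\{1\}$, so Lemma \ref{lemma1} kills $d_{23},d_{32},d_{24},d_{42},d_{34},d_{43}$, and Lemma \ref{lemma2.5} forces each row to sum to zero (in particular $d_{44}=0$), leaving only the six free parameters in the matrix displayed above.

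I would attack the ``peripheral'' rows first. Expanding $D({\bf e_2}\ast{\bf e_3})$ both ways and matching the coefficients of ${\bf e_1}$ and ${\bf e_2}$ produces two equations whose only common solution is $d_{21}=d_{31}=0$, since the case hypothesis $3\notin[4]_2=\{1\}$ gives $p_{23,2}\neq\tfrac12$. The analogous computation with $D({\bf e_2}\ast{\bf e_4})$, using $p_{24,2}\neq\tfrac12$, gives $d_{41}=0$; combined with the row-sum conditions this yields $D({\bf e_2})=D({\bf e_3})=D({\bf e_4})=0$.

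Finally I would clear the first row. With $D({\bf e_2})=0$, expanding $D({\bf e_1}\ast{\bf e_2})$ both ways reduces to comparing $\tfrac12 D({\bf e_1})$ with ${\bf e_2}\ast D({\bf e_1})$; matching the ${\bf e_3}$- and ${\bf e_4}$-coefficients and invoking $p_{23,3}\neq\tfrac12$ and $p_{24,4}\neq\tfrac12$ forces $d_{13}=0$ and $d_{11}+d_{12}=0$, so $D({\bf e_1})=d_{11}({\bf e_1}-{\bf e_2})$. Expanding $D({\bf e_1}\ast{\bf e_3})$ one more time and comparing the ${\bf e_2}$-coefficient, again with $p_{23,2}\neq\tfrac12$, kills $d_{11}$; hence $D=0$.

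The mathematical content here is light; the only point that really needs care is the order of the products. One must reduce $D({\bf e_2})$, $D({\bf e_3})$, $D({\bf e_4})$ before touching $D({\bf e_1})$, because otherwise the coefficient comparisons are underdetermined, and one must check at each step that the relevant structure constant is genuinely $\neq\tfrac12$ --- all of $p_{23,2},p_{23,3},p_{24,2},p_{24,4}\neq\tfrac12$ being immediate consequences of $[4]_2=[4]_3=[4]_4=\{1\}$. That bookkeeping is essentially the whole ``obstacle.''
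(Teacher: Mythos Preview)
Your argument is correct and follows exactly the same route as the paper: the same six off-diagonal entries are zeroed by Lemma~\ref{lemma1}, then $D({\bf e_2}\ast{\bf e_3})$ and $D({\bf e_2}\ast{\bf e_4})$ kill $d_{21},d_{31},d_{41}$, then $D({\bf e_1}\ast{\bf e_2})$ forces $d_{13}=0$ and $d_{11}+d_{12}=0$, and finally $D({\bf e_1}\ast{\bf e_3})$ gives $d_{11}=0$, with the same structure-constant inequalities invoked at each step. The order of products, the choice of which coefficients to compare, and the logic are identical to the paper's own proof.
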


\subsection{Case F} \label{subsection6} By Lemma \ref{lemma1} and (\ref{equation2}), we have the following results: \[d_{14}=d_{41}=d_{23}=d_{32}=d_{24}=d_{42}=0,\]\[{\bf e_1}\ast {\bf e_2}=\frac{1}{2} {\bf e_1}+\frac{1}{2} {\bf e_2},\quad{\bf e_1}\ast {\bf e_3}=\frac{1}{2}{\bf e_1}+\frac{1}{2}{\bf e_3},\quad{\bf e_3}\ast {\bf e_4}=\frac{1}{2}{\bf e_3}+\frac{1}{2}{\bf e_4}.\]Let $a=_{21}$, $b=d_{34}$, $c=d_{11}$, $d=d_{12}$, $f=d_{31}$, $g=d_{33}$. By Lemma \ref{lemma2.5},
\[
D=(d_{ij})_{i,j\in [4]}=\left( {\begin{array}{cccc}
c & d & -(c+d) & 0\\
a & -a & 0 & 0\\
f & 0 & g & -(f+g) \\
0& 0 & b & -b\\
\end{array}}\right).
\]For $D({\bf e_1}\ast {\bf e_2})$, we compute it in two ways. First, 
\begin{align}
\nonumber D({\bf e_1}\ast {\bf e_2}) &= \frac{1}{2} D({\bf e_1})+\frac{1}{2} D({\bf e_2}) \\
\nonumber &= \frac{1}{2}(c{\bf e_1}+d{\bf e_2}-(c+d){\bf e_3})+\frac{1}{2} (a{\bf e_1}-a{\bf e_2}) \\
\label{equation30}  &= \frac{1}{2}(c+a){\bf e_1} +\frac{1}{2}(d-a){\bf e_2}-\frac{1}{2}(c+d) {\bf e_3}.
\end{align}Alternatively,
\begin{align}
\nonumber D({\bf e_1}\ast {\bf e_2}) &={\bf e_1}\ast D({\bf e_2})+{\bf e_2}\ast D({\bf e_1}) \\
\nonumber &= {\bf e_1}\ast (a{\bf e_1}-a{\bf e_2})+{\bf e_2}\ast (c{\bf e_1}+d{\bf e_2}-(c+d){\bf e_3})\\
\label{equation31} &= \begin{aligned}[t]
            a{\bf e_1}-a(\frac{1}{2}{\bf e_1}+\frac{1}{2} {\bf e_2})+c(\frac{1}{2}{\bf e_1}+\frac{1}{2}{\bf e_2})+d{\bf e_2} \\
            -(c+d) (p_{23,2}{\bf e_2}+p_{23,3}{\bf e_3}).
        \end{aligned}
\end{align}By comparing coefficients of ${\bf e_2}$ and ${\bf e_3}$ on both sides of (\ref{equation30}) and (\ref{equation31}), we get 
\begin{align}
\label{equation32}  (c+d)(\frac{1}{2} -p_{23,2}) &= 0 \\
\label{equation33}  (c+d)(\frac{1}{2} -p_{23,3}) &=0.
\end{align}Since $p_{23,2}\neq 1/2$ and $p_{23,3}\neq 1/2$, we get $c+d=0$ by either (\ref{equation31}) or (\ref{equation32}). We do a similar computation on $D({\bf e_3}\ast {\bf e_4})$ to get $f=0$. The derivation $D$ is in the following form: 
\[
D=(d_{ij})_{i,j\in [4]}=\left( {\begin{array}{cccc}
c & -c & 0 & 0\\
a & -a & 0 & 0\\
0 & 0 & g & -g \\
0& 0 & b & -b\\
\end{array}}\right).
\]We compute $D({\bf e_1}\ast {\bf e_3})$ in two ways. First, 
\begin{align}
\nonumber  D({\bf e_1}\ast {\bf e_3}) &= \frac{1}{2} D({\bf e_1})+\frac{1}{2} D({\bf e_3}) \\
\label{equation34}  &= \frac{1}{2}(c{\bf e_1}-c {\bf e_2})+\frac{1}{2} (g {\bf e_3} -g {\bf e_4}).
\end{align}Alternatively,
\begin{align}
\nonumber D({\bf e_1}\ast {\bf e_3}) &= {\bf e_1}\ast D({\bf e_3})+{\bf e_3}\ast D({\bf e_1}) \\
\nonumber &= {\bf e_1}\ast (g {\bf e_3} -g {\bf e_4})+{\bf e_3} \ast (c {\bf e_1} -c {\bf e_2})\\
\label{equation35}  &= \begin{aligned}[t]
            g(\frac{1}{2}{\bf e_1}+\frac{1}{2}{\bf e_3})-g(p_{14,1}{\bf e_1}+p_{14,4} {\bf e_4}) +c(\frac{1}{2}{\bf e_1}+\frac{1}{2} {\bf e_3})\\
            -c(p_{23,2} {\bf e_2}+p_{23,3} {\bf e_3}).
        \end{aligned}
\end{align}By comparing coefficients of ${\bf e_1}$ and ${\bf e_2}$ on both sides of (\ref{equation34}) and (\ref{equation35}),
\begin{align}
\label{equation36} g(\frac{1}{2} -p_{14,1}) &= 0, \\
\label{equation37} c(\frac{1}{2} -p_{23,2}) &=0.
\end{align}Since $p_{14,1}\neq 1/2$ and $p_{23,2}\neq 1/2$, we get $g=c=0$. Hence, $D({\bf e_1})=0$ and $D({\bf e_3})=0$. For $D({\bf e_2}\ast {\bf e_3})$, 
\begin{align}
\nonumber D({\bf e_2}\ast {\bf e_3}) &= {\bf e_2}\ast D({\bf e_3}) + {\bf e_3}\ast D({\bf e_2}) \\
\nonumber D(p_{23,2} {\bf e_2}+p_{23,3} {\bf e_3}) &= {\bf e_3} \ast D({\bf e_2})  \\
\nonumber p_{23,2} D({\bf e_2}) &= {\bf e_3} \ast (a {\bf e_1} -a {\bf e_2}) \\
\label{equation38} p_{23,2} (a {\bf e_1} -a {\bf e_2}) &= a(\frac{1}{2} {\bf e_1} +\frac{1}{2} {\bf e_3})-a (p_{23,2}{\bf e_2}+p_{23,3} {\bf e_3}).
\end{align}By comparing coefficients of ${\bf e_1}$ in (\ref{equation38}), we get 
\begin{align}
\label{equation39} a(\frac{1}{2} - p_{23,2}) &=0. 
\end{align}Since $p_{23,2}\neq 1/2$, we get $a=0$ in (\ref{equation39}). By using the fact that $D({\bf e_1})=0$, we do a similar computation on $D({\bf e_1}\ast {\bf e_4})$ to get $b=0$. We summarize our results:

\begin{theorem} \label{caseF}
Let $(\mathbb{R}^4,\ast)$ be a four dimensional genetic Volterra algebra. In Case F: $|[4]_1|=2, |[4]_2|=1, |[4]_3|=2, |[4]_4|=1$, all derivations are trivial.
\end{theorem}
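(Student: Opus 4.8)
The plan is to apply the derivation identity $D(u\ast v)=D(u)\ast v+u\ast D(v)$ to products of basis vectors and compare coefficients, exploiting that in Case F the pairs $\{i,j\}$ with $p_{ij,i}=1/2$ form exactly the path $2-1-3-4$; equivalently, the structure constants $p_{14,1},p_{14,4},p_{23,2},p_{23,3},p_{24,2},p_{24,4}$ are all different from $1/2$. Every coefficient comparison I set up produces an identity of the shape $\lambda\bigl(\tfrac12-p_{ij,k}\bigr)=0$ with $p_{ij,k}\neq\tfrac12$, which forces the free parameter $\lambda$ to vanish; the argument is a matter of peeling these parameters off in the right order.

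First I would expand $D({\bf e_1}\ast{\bf e_2})$ in two ways, using ${\bf e_1}\ast{\bf e_2}=\tfrac12{\bf e_1}+\tfrac12{\bf e_2}$ on one side and ${\bf e_1}\ast D({\bf e_2})+{\bf e_2}\ast D({\bf e_1})$ on the other. The coefficient of ${\bf e_3}$ on the left is $-\tfrac12(c+d)$, while on the right it equals $-(c+d)p_{23,3}$ (the only ${\bf e_3}$-contribution comes from the term $-(c+d){\bf e_3}$ of $D({\bf e_1})$ multiplied by ${\bf e_2}$), so $c+d=0$; the ${\bf e_2}$-comparison likewise gives $(c+d)(\tfrac12-p_{23,2})=0$. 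The same computation applied to $D({\bf e_3}\ast{\bf e_4})$, using ${\bf e_3}\ast{\bf e_4}=\tfrac12{\bf e_3}+\tfrac12{\bf e_4}$, isolates the coefficient of ${\bf e_1}$ and yields $f(\tfrac12-p_{14,1})=0$, hence $f=0$. After these two steps the matrix has shrunk to the block form with rows $(c,-c,0,0)$, $(a,-a,0,0)$, $(0,0,g,-g)$, $(0,0,b,-b)$, i.e. $D({\bf e_1})=c({\bf e_1}-{\bf e_2})$ and $D({\bf e_3})=g({\bf e_3}-{\bf e_4})$.

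Next I would expand $D({\bf e_1}\ast{\bf e_3})=\tfrac12 D({\bf e_1})+\tfrac12 D({\bf e_3})$ against ${\bf e_1}\ast D({\bf e_3})+{\bf e_3}\ast D({\bf e_1})$: comparing the coefficient of ${\bf e_2}$ gives $c(\tfrac12-p_{23,2})=0$, so $c=0$, and comparing the coefficient of ${\bf e_1}$ (or of ${\bf e_4}$) gives $g(\tfrac12-p_{14,1})=0$, so $g=0$. Thus $D({\bf e_1})=D({\bf e_3})=0$. Finally, with $D({\bf e_3})=0$, expanding $D({\bf e_2}\ast{\bf e_3})=p_{23,2}D({\bf e_2})$ against ${\bf e_3}\ast D({\bf e_2})$ and reading off the ${\bf e_1}$-coefficient gives $a(\tfrac12-p_{23,2})=0$, hence $a=0$; symmetrically, with $D({\bf e_1})=0$, expanding $D({\bf e_1}\ast{\bf e_4})$ (note ${\bf e_1}\ast{\bf e_4}=p_{14,1}{\bf e_1}+p_{14,4}{\bf e_4}$ since $4\notin[4]_1$) and reading off the ${\bf e_1}$-coefficient gives $b(\tfrac12-p_{14,1})=0$, hence $b=0$. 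Therefore $D=0$.

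There is no conceptual obstacle here; the one point needing care is the order of operations. The vectors ${\bf e_1}$ and ${\bf e_3}$ each have two ``half-half'' partners, so $D({\bf e_1})$ and $D({\bf e_3})$ are the most constrained and can be killed only after the off-block quantities $c+d$ and $f$ have been cleared; the products ${\bf e_1}\ast{\bf e_3}$, ${\bf e_2}\ast{\bf e_3}$, ${\bf e_1}\ast{\bf e_4}$ become decisive only once the matrix has been trimmed in the first stage. Beyond that, one just needs to keep straight which structure constants are pinned at $1/2$ (those on the edges $\{1,2\},\{1,3\},\{3,4\}$) and which are not.
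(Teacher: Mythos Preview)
Your proposal is correct and follows essentially the same route as the paper's proof: the same initial matrix shape (via Lemma~\ref{lemma1} and Lemma~\ref{lemma2.5}), then the same sequence of products $D({\bf e_1}\ast{\bf e_2})$, $D({\bf e_3}\ast{\bf e_4})$, $D({\bf e_1}\ast{\bf e_3})$, $D({\bf e_2}\ast{\bf e_3})$, $D({\bf e_1}\ast{\bf e_4})$ to kill $c+d$, $f$, $c$, $g$, $a$, $b$ in turn. Your observation that the half--half pairs form the path $2\text{--}1\text{--}3\text{--}4$ is a helpful way to organize why this order of elimination is the natural one.
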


\subsection{Case G} \label{subsection7} By Lemma \ref{lemma1} and (\ref{equation2}), we have the following results: \[d_{23}=d_{32}=d_{24}=d_{42}=0.\]\[{\bf e_1}\ast {\bf e_2}=\frac{1}{2}{\bf e_1}+\frac{1}{2}{\bf e_2},\quad {\bf e_1}\ast {\bf e_3}=\frac{1}{2} {\bf e_1}+\frac{1}{2}{\bf e_3},\]\[{\bf e_1}\ast {\bf e_4}=\frac{1}{2}{\bf e_1}+\frac{1}{2}{\bf e_4},\quad {\bf e_3}\ast {\bf e_4}=\frac{1}{2}{\bf e_3}+\frac{1}{2}{\bf e_4}.\]Let $a=d_{21}$, $b=d_{31}$, $c=d_{33}$, $d=d_{41}$, $e=d_{43}$, $f=d_{11}$, $g=d_{12}$, $h=d_{13}$. By Lemma \ref{lemma2.5},
\[
D=(d_{ij})_{i,j\in [4]}=\left( {\begin{array}{cccc}
f & g & h & -(f+g+h)\\
a & -a & 0 & 0\\
b & 0 & c & -(b+c) \\
d& 0 & e & -(d+e)\\
\end{array}}\right).
\]We compute $D({\bf e_2}\ast {\bf e_3})$ in two ways. First, 
\begin{align}
\nonumber D({\bf e_2}\ast {\bf e_3}) &= p_{23,2}D({\bf e_2})+p_{23,3}D({\bf e_3}) \\
\label{equation39.4}  &= p_{23,2}(a{\bf e_1}-a{\bf e_2})+p_{23,3}(b{\bf e_1}+c{\bf e_3}-(b+c){\bf e_4}).
\end{align}Alternatively, 
\begin{align}
\nonumber D({\bf e_2}\ast {\bf e_3}) &= {\bf e_2}\ast D({\bf e_3})+ {\bf e_3}\ast D({\bf e_2}) \\
\nonumber &= {\bf e_2}\ast (b{\bf e_1}+c{\bf e_3}-(b+c){\bf e_4})+{\bf e_3}\ast (a{\bf e_1}-a{\bf e_2}) \\
\label{equation39.5} &=\begin{aligned}[t]
           b(\frac{1}{2}{\bf e_1}+\frac{1}{2}{\bf e_2})+c(p_{23,2}{\bf e_2}+p_{23,3}{\bf e_3})-(b+c)(p_{24,2}{\bf e_2}\\
            +p_{24,4}{\bf e_4})+a(\frac{1}{2}{\bf e_1}+\frac{1}{2}{\bf e_3})-a(p_{23,2}{\bf e_2}+p_{23,3}{\bf e_3}).
        \end{aligned}
\end{align}By comparing coefficients of ${\bf e_1}$, ${\bf e_3}$ on both sides of (\ref{equation39.4}) and (\ref{equation39.5}), 
\begin{align}
\label{equation39.6} (a-b)(\frac{1}{2}-p_{23,2})=0,  \\
\label{equation39.7} a(\frac{1}{2}-p_{23,3})=0. 
\end{align}Since $p_{23,2}\neq 1/2$, $p_{23,3}\neq 1/2$, we get $a=b=0$ by (\ref{equation39.6}), (\ref{equation39.7}). So, $D({\bf e_2})=0$. By comparing coefficients of ${\bf e_2}$, ${\bf e_4}$ on both sides of (\ref{equation39.4}) and (\ref{equation39.5}), 
\begin{align}
\label{equation39.8} c(p_{23,2}-p_{24,2}) &=0, \\
\label{equation39.9} c(p_{23,3}-p_{24,4}) &=0. 
\end{align}We do a similar computation on $D({\bf e_2}\ast {\bf e_4})$ to get $d=0$ and 
\begin{align}
\label{equation39.91} e(p_{23,2} -p_{24,2}) &= 0,\\
\label{equation39.92} e(p_{23,3} -p_{24,4}) &= 0. 
\end{align}By (\ref{equation39.8}), (\ref{equation39.9}), (\ref{equation39.91}), (\ref{equation39.92}), if $p_{23,2}=p_{24,2}$ (it automatically implies that $p_{23,3}=p_{24,4}$), then $c=e=0$. Next, we compute $D({\bf e_1}\ast {\bf e_2})$ in two ways. First, 
\begin{align}
\nonumber D({\bf e_1}\ast {\bf e_2}) &= D (\frac{1}{2}{\bf e_1}+\frac{1}{2}{\bf e_2})=\frac{1}{2}D({\bf e_1})\\
\label{equation39.93}   &=\frac{1}{2} (f{\bf e_1} +g{\bf e_2} +h {\bf e_3}-(f+g+h) {\bf e_4}).
\end{align}Alternatively,
\begin{align}
\nonumber D({\bf e_1}\ast {\bf e_2}) &= {\bf e_1}\ast D({\bf e_2}) + {\bf e_2} \ast D({\bf e_1})= {\bf e_2}\ast D({\bf e_1}) \\
\nonumber &= {\bf e_2}\ast (f{\bf e_1}+g {\bf e_2}+h {\bf e_3}-(f+g+h) {\bf e_4})\\
\label{equation39.94}  &=\begin{aligned}[t]
           f(\frac{1}{2}{\bf e_1}+\frac{1}{2}{\bf e_2})+g{\bf e_2}+h(p_{23,2}{\bf e_2}+p_{23,3}{\bf e_3})\\
            -(f+g+h) (p_{24,2}{\bf e_2} +p_{24,4} {\bf e_4}).
        \end{aligned}
\end{align}By comparing coefficients of ${\bf e_3}$, ${\bf e_4}$ on both sides of (\ref{equation39.93}) and (\ref{equation39.94}), 
\begin{align}
\label{equation39.95} h(\frac{1}{2}-p_{23,3}) &=0, \\
\label{equation39.96} (f+g) (\frac{1}{2} -p_{24,4}) &=0.
\end{align}Since $p_{23,3}\neq 1/2$, $p_{24,4}\neq 1/2$, $h=0$ and $f+g=0$ by (\ref{equation39.95}), (\ref{equation39.96}). We update the form of the derivation $D$ as follows:
\[
D=(d_{ij})_{i,j\in [4]}=\left( {\begin{array}{cccc}
f & -f & 0 & 0\\
0 & 0 & 0 & 0\\
0 & 0 & c & -c \\
0& 0 & e & -e\\
\end{array}}\right).
\]We compute $D({\bf e_1}\ast {\bf e_3})$ in two ways. First,
\begin{align}
\nonumber D({\bf e_1}\ast {\bf e_3})&=\frac{1}{2}D({\bf e_1}) +\frac{1}{2}D({\bf e_3}) \\
\label{equation39.97}  &= \frac{1}{2}(f{\bf e_1}-f{\bf e_2})+\frac{1}{2} (c{\bf e_3}-c{\bf e_4}).
\end{align}Alternatively, 
\begin{align}
\nonumber D({\bf e_1}\ast {\bf e_3}) &= {\bf e_1}\ast D({\bf e_3}) + {\bf e_3}\ast D({\bf e_1}) \\
\nonumber &= {\bf e_1}\ast (c{\bf e_3}-c{\bf e_4})+{\bf e_3}\ast (f{\bf e_1}-f{\bf e_2}) \\
\label{equation39.98} &=\begin{aligned}[t]
           c(\frac{1}{2}{\bf e_1}+\frac{1}{2}{\bf e_3})-c(\frac{1}{2}{\bf e_1}+\frac{1}{2}{\bf e_4})+f(\frac{1}{2}{\bf e_1}+\frac{1}{2}{\bf e_3})\\
            -f(p_{23,2}{\bf e_2}+p_{23,3}{\bf e_3}).
        \end{aligned}
\end{align}By comparing coefficients of ${\bf e_3}$ on both sides of (\ref{equation39.97}), (\ref{equation39.98}),
\begin{align}
\label{equation39.99} f(\frac{1}{2}-p_{23,2}) =0. 
\end{align}Since $p_{23,2}\neq 1/2$, we get $f=0$ in (\ref{equation39.99}). So, the derivation $D$ is essentially the same as the derivations appeared in Case A. We summarize our results:
\begin{theorem} \label{caseG}
Let $(\mathbb{R}^4,\ast)$ be a four dimensional genetic Volterra algebra. In Case G: $|[4]_1|=3, |[4]_2|=1, |[4]_3|=2, |[4]_4|=2$, derivations are in the form 
\[
D=(d_{ij})_{i,j\in [4]}=\left( {\begin{array}{cccc}
0 & 0 & 0 & 0\\
0 & 0 & 0 & 0\\
0 & 0 & c & -c \\
0& 0 & e & -e\\
\end{array}}\right).
\]If $D$ is non-trivial (one of $c$ and $e$ is non-zero), then $p_{23,2}=p_{24,2}$ and $p_{23,3}=p_{24,4}$. 
\end{theorem}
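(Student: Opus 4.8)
The plan is to carry out for Case~G the same cascade of linear eliminations used in the earlier cases. By Lemma~\ref{lemma1} the entries $d_{23},d_{32},d_{24},d_{42}$ vanish, and by Lemma~\ref{lemma2.5} the fourth entry of each row is the negative of the sum of the first three, so a derivation of $(\mathbb{R}^4,\ast)$ in Case~G is governed a priori by the eight parameters $a,b,c,d,e,f,g,h$ of the matrix $D=(d_{ij})$ displayed above. The structural fact that drives the argument is that $[4]_2=\{1\}$: the products ${\bf e_2}\ast{\bf e_3}$ and ${\bf e_2}\ast{\bf e_4}$ are genuinely asymmetric, so all four of $p_{23,2},p_{23,3},p_{24,2},p_{24,4}$ differ from $1/2$. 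These constants act as invertible multipliers, in the sense that any equation of the shape $\lambda(\frac{1}{2}-p)=0$, with $\lambda$ one of the parameters and $p$ one of them, forces $\lambda=0$.

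First I would expand $D({\bf e_2}\ast{\bf e_3})$ in the two usual ways, $p_{23,2}D({\bf e_2})+p_{23,3}D({\bf e_3})$ on one side and ${\bf e_2}\ast D({\bf e_3})+{\bf e_3}\ast D({\bf e_2})$ on the other, and match coefficients: comparing the coefficients of ${\bf e_1}$ and ${\bf e_3}$ yields $a=b=0$, so $D({\bf e_2})=0$, while comparing the coefficients of ${\bf e_2}$ and ${\bf e_4}$ yields $c(p_{23,2}-p_{24,2})=0$ and $c(p_{23,3}-p_{24,4})=0$. The identical computation with $D({\bf e_2}\ast{\bf e_4})$ gives $d=0$ together with $e(p_{23,2}-p_{24,2})=0$ and $e(p_{23,3}-p_{24,4})=0$. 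Using $D({\bf e_2})=0$, I would then expand $D({\bf e_1}\ast{\bf e_2})=\frac{1}{2}D({\bf e_1})$ as ${\bf e_2}\ast D({\bf e_1})$ and compare the coefficients of ${\bf e_3}$ and ${\bf e_4}$, forcing $h=0$ and $f+g=0$; finally, expanding $D({\bf e_1}\ast{\bf e_3})=\frac{1}{2}D({\bf e_1})+\frac{1}{2}D({\bf e_3})$ as ${\bf e_1}\ast D({\bf e_3})+{\bf e_3}\ast D({\bf e_1})$ and comparing one further coefficient forces $f=0$, hence $g=0$. At that point rows~$1$ and~$2$ of $D$ are zero, only $c=d_{33}$ and $e=d_{43}$ survive, and $D$ has the form claimed in the theorem. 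The second statement is then immediate: if $D\neq 0$ then $c\neq 0$ or $e\neq 0$, and one of the relations obtained above forces $p_{23,2}=p_{24,2}$, whence $p_{23,3}=1-p_{23,2}=1-p_{24,2}=p_{24,4}$ by (\ref{equation1}).

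I do not expect a genuine obstacle here: the argument is a finite, self-propagating chain of eliminations in which each step is forced once the previous ones are in place, much as in Cases~E and~F. The parts that need attention are purely clerical. The asymmetric products must be expanded with the correct structure constants on each factor --- for instance ${\bf e_3}\ast{\bf e_2}=p_{23,2}{\bf e_2}+p_{23,3}{\bf e_3}$ and ${\bf e_2}\ast{\bf e_4}=p_{24,2}{\bf e_2}+p_{24,4}{\bf e_4}$ --- and the vanishing facts $D({\bf e_2})=0$ (and, later, $D({\bf e_3})=0$) should be substituted before expanding, so that the matched equations are not cluttered by terms that cancel anyway. It is also worth confirming that no remaining product, notably ${\bf e_3}\ast{\bf e_4}$, imposes an extra condition once $D$ is in its final shape; it does not, so the characterization is complete, and a short direct check shows conversely that whenever $p_{23,2}=p_{24,2}$ every matrix of the displayed form is indeed a derivation.
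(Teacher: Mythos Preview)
Your proposal is correct and follows essentially the same route as the paper: the same sequence of products $D({\bf e_2}\ast{\bf e_3})$, $D({\bf e_2}\ast{\bf e_4})$, $D({\bf e_1}\ast{\bf e_2})$, $D({\bf e_1}\ast{\bf e_3})$ is used to eliminate $a,b,d,h,f,g$ in turn and to extract the constraints on $p_{23,2},p_{24,2}$ from the surviving parameters $c,e$. One small slip to fix: in your closing paragraph you refer parenthetically to ``$D({\bf e_3})=0$'', but in Case~G this never holds (indeed $D({\bf e_3})=c{\bf e_3}-c{\bf e_4}$ in the final form), so that remark should be dropped.
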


\subsection{Case H} \label{subsection8} By Lemma \ref{lemma1} and (\ref{equation2}), we have the following results: \[d_{13}=d_{31}=d_{24}=d_{42}=0.\]\[{\bf e_1}\ast {\bf e_2}=\frac{1}{2}{\bf e_1}+\frac{1}{2}{\bf e_2},\quad {\bf e_1}\ast {\bf e_4}=\frac{1}{2} {\bf e_1}+\frac{1}{2}{\bf e_4},\]\[{\bf e_2}\ast {\bf e_3}=\frac{1}{2}{\bf e_2}+\frac{1}{2}{\bf e_3},\quad {\bf e_3}\ast {\bf e_4}=\frac{1}{2}{\bf e_3}+\frac{1}{2}{\bf e_4}.\]Let $a=d_{11}$, $b=d_{12}$, $c=d_{21}$, $d=d_{22}$, $e=d_{32}$, $f=d_{33}$, $g=d_{41}$, $h=d_{43}$. By Lemma \ref{lemma2.5}, 
\[
D=(d_{ij})_{i,j\in [4]}=\left( {\begin{array}{cccc}
a & b & 0 & -(a+b)\\
c & d & -(c+d) & 0\\
0 & e & f & -(e+f) \\
g & 0 & h & -(g+h)\\
\end{array}}\right).
\]We compute $D({\bf e_1}\ast {\bf e_2})$ in two ways. First,
\begin{align}
\nonumber D({\bf e_1}\ast {\bf e_2}) &= \frac{1}{2} D({\bf e_1})+\frac{1}{2} D({\bf e_2}) \\
\label{equation41}  &=\frac{1}{2}(a+c){\bf e_1}+\frac{1}{2}(b+d){\bf e_2}-\frac{1}{2}(c+d){\bf e_3}-\frac{1}{2}(a+b) d_{14}{\bf e_4}.
\end{align}Alternatively,
\begin{align}
\nonumber D({\bf e_1}\ast {\bf e_2}) &= {\bf e_1}\ast D({\bf e_2}) + {\bf e_2}\ast D({\bf e_1}) \\
\nonumber &={\bf e_1}\ast (c {\bf e_1}+d {\bf e_2}-(c+d) {\bf e_3})+{\bf e_2}\ast (a{\bf e_1}+b{\bf e_2}-(a+b){\bf e_4})\\
\label{equation42} &=\begin{aligned}[t]
           c{\bf e_1}+d(\frac{1}{2}{\bf e_1}+\frac{1}{2}{\bf e_2})-(c+d)(p_{13,1}{\bf e_1}+p_{13,3}{\bf e_3})\\
            a (\frac{1}{2}{\bf e_1}+\frac{1}{2}{\bf e_2})+b{\bf e_2}-(a+b)(p_{24,2}{\bf e_2}+p_{24,4} {\bf e_4}).
        \end{aligned}
\end{align}By comparing coefficients of ${\bf e_1}$ in (\ref{equation41}) and (\ref{equation42}), we get 
\begin{align}
\nonumber (c+d)(\frac{1}{2} -p_{13,1}) &= 0.
\end{align}Since $p_{13,1}\neq 1/2$, we get $c+d=0$. By comparing coefficients of ${\bf e_2}$ in (\ref{equation41}) and (\ref{equation42}), we get 
\begin{align}
\nonumber (a+b)(\frac{1}{2}-p_{24,2})=0.
\end{align}Since $p_{24,2}\neq 1/2$, we get $a+b=0$. We do similar computations on $D({\bf e_3}\ast {\bf e_4})$ to get $e=0$ and $g=0$. Then, the derivation $D$ is in the form:
\[
D=(d_{ij})_{i,j\in [4]}=\left( {\begin{array}{cccc}
a & -a & 0 & 0\\
c & -c & 0  & 0\\
0 & 0 & f & -f \\
0 & 0 & h & -h\\
\end{array}}\right).
\]We compute $D({\bf e_1}\ast {\bf e_3})$ in two ways. First, 
\begin{align}
\nonumber D({\bf e_1}\ast {\bf e_3}) &= p_{13,1}D({\bf e_1}) +p_{13,3} D({\bf e_3})\\
\label{equation43} &= p_{13,1}(a{\bf e_1}-a {\bf e_2})+p_{13,3} (f {\bf e_3}-f {\bf e_4}).
\end{align}Alternatively, 
\begin{align}
\nonumber D({\bf e_1}\ast {\bf e_3}) &= {\bf e_1}\ast D({\bf e_3}) +{\bf e_3} \ast D({\bf e_1}) \\
\nonumber &= {\bf e_1}\ast (f{\bf e_3} -f {\bf e_4})+{\bf e_3}\ast (a{\bf e_1}-a {\bf e_2}) \\
\label{equation44} &=\begin{aligned}[t]
           f(p_{13,1}{\bf e_1}+p_{13,3}{\bf e_3})-f (\frac{1}{2}{\bf e_1}+\frac{1}{2} {\bf e_4})+a (p_{13,1} {\bf e_1}+p_{13,3}{\bf e_3}  )  \\
            -a(\frac{1}{2}{\bf e_2}+\frac{1}{2}{\bf e_3}).
        \end{aligned}
\end{align}By comparing coefficients of ${\bf e_1}$ and ${\bf e_2}$ in (\ref{equation43}) and (\ref{equation44}), we get 
\begin{align}
\label{equation45}  f(\frac{1}{2}-p_{13,1})=0 \\ 
\label{equation46}  a(\frac{1}{2}-p_{13.1})=0.
\end{align}Since $p_{13,1}\neq 1/2$, we get $a=f=0$ by (\ref{equation45}) and (\ref{equation46}). We do similar computations on $D({\bf e_2}\ast {\bf e_4})$ to get $c=h=0$. We summarize our results:

\begin{theorem} \label{caseH}
Let $(\mathbb{R}^4,\ast)$ be a four dimensional genetic Volterra algebra. In Case H: $|[4]_1|=2, |[4]_2|=2, |[4]_3|=2, |[4]_4|=2$, all derivations are trivial.
\end{theorem}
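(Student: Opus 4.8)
The strategy is the same coefficient-matching scheme used in Cases A--G, pushed through to completion. Lemma \ref{lemma1} already forces $d_{13}=d_{31}=d_{24}=d_{42}=0$, and Lemma \ref{lemma2.5} (every row of $D$ sums to zero) cuts the matrix down to the eight-parameter form displayed above. The only structural feature of Case H that I would keep in view is that, among the six off-diagonal index pairs, exactly $\{1,3\}$ and $\{2,4\}$ fail to lie in the ``$1/2$'' relation: since $3\notin [4]_1$, $1\notin [4]_3$, $4\notin [4]_2$, $2\notin [4]_4$, all of $p_{13,1},p_{13,3},p_{24,2},p_{24,4}$ differ from $1/2$, whereas ${\bf e_1}\ast {\bf e_2}$, ${\bf e_1}\ast {\bf e_4}$, ${\bf e_2}\ast {\bf e_3}$, ${\bf e_3}\ast {\bf e_4}$ are all of the symmetric shape $\tfrac12 {\bf e_i}+\tfrac12 {\bf e_j}$. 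These two ``exceptional'' products are the ones that will do the killing.

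First I would evaluate $D({\bf e_1}\ast {\bf e_2})$ in the two standard ways: directly as $\tfrac12 D({\bf e_1})+\tfrac12 D({\bf e_2})$, and through the Leibniz rule as ${\bf e_1}\ast D({\bf e_2})+{\bf e_2}\ast D({\bf e_1})$, where the right-hand side brings in the exceptional products ${\bf e_1}\ast {\bf e_3}=p_{13,1}{\bf e_1}+p_{13,3}{\bf e_3}$ and ${\bf e_2}\ast {\bf e_4}=p_{24,2}{\bf e_2}+p_{24,4}{\bf e_4}$. Matching the coefficient of ${\bf e_1}$ gives $(c+d)(\tfrac12-p_{13,1})=0$ and matching the coefficient of ${\bf e_2}$ gives $(a+b)(\tfrac12-p_{24,2})=0$; since neither $p_{13,1}$ nor $p_{24,2}$ equals $1/2$, this forces $c+d=0$ and $a+b=0$. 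The mirror computation with $D({\bf e_3}\ast {\bf e_4})$ forces $e=0$ and $g=0$. At this stage $D$ has collapsed to the four-parameter block form with rows $(a,-a,0,0)$, $(c,-c,0,0)$, $(0,0,f,-f)$, $(0,0,h,-h)$.

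Now I would turn to the two exceptional products themselves. Evaluating $D({\bf e_1}\ast {\bf e_3})$ directly as $p_{13,1}D({\bf e_1})+p_{13,3}D({\bf e_3})$ and via Leibniz as ${\bf e_1}\ast D({\bf e_3})+{\bf e_3}\ast D({\bf e_1})$ --- which now pulls in ${\bf e_1}\ast {\bf e_4}=\tfrac12 {\bf e_1}+\tfrac12 {\bf e_4}$ and ${\bf e_2}\ast {\bf e_3}=\tfrac12 {\bf e_2}+\tfrac12 {\bf e_3}$ --- and comparing the coefficients of ${\bf e_1}$ and of ${\bf e_2}$ yields $f(\tfrac12-p_{13,1})=0$ and $a(\tfrac12-p_{13,1})=0$, hence $a=f=0$. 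The analogous computation on $D({\bf e_2}\ast {\bf e_4})$ yields $c=h=0$. All eight parameters vanish, so $D=0$, which is the claim of Theorem \ref{caseH}.

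I do not anticipate a real obstacle: everything is finite linear bookkeeping. The one point that needs care is the \emph{order} of operations --- one must first use the four symmetric products to reduce $D$ to the block form, because only after $b=-a$, $d=-c$, $e=g=0$ do the two computations on $\{1,3\}$ and $\{2,4\}$ separate into one-parameter equations; run on the full eight-parameter matrix these comparisons would only relate the structure constants to one another and not conclude that any $d_{ij}$ is zero. It is precisely the inequalities $p_{13,1},p_{13,3},p_{24,2},p_{24,4}\neq 1/2$, built into the hypothesis $|[4]_1|=|[4]_2|=|[4]_3|=|[4]_4|=2$ of Case H, that convert each coefficient comparison into the vanishing of a parameter rather than a relation among the $p$'s.
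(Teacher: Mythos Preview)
Your proposal is correct and follows essentially the same route as the paper: first use $D({\bf e_1}\ast{\bf e_2})$ and $D({\bf e_3}\ast{\bf e_4})$ (the symmetric products touching the exceptional pairs) to reduce $D$ to the four-parameter block form, then use the two exceptional products $D({\bf e_1}\ast{\bf e_3})$ and $D({\bf e_2}\ast{\bf e_4})$ to kill the remaining parameters $a,f$ and $c,h$. Your remark about the necessary order of these computations is exactly right and is implicit in the paper's presentation.
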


\subsection{Case I} \label{subsection9} By Lemma \ref{lemma1} and (\ref{equation2}), we have the following results: \[d_{24}=d_{42}=0.\]\[{\bf e_1}\ast {\bf e_2}=\frac{1}{2}{\bf e_1}+\frac{1}{2}{\bf e_2},\quad {\bf e_1}\ast {\bf e_3}=\frac{1}{2}{\bf e_1}+\frac{1}{2}{\bf e_3}, \quad {\bf e_1}\ast {\bf e_4}=\frac{1}{2}{\bf e_1}+\frac{1}{2}{\bf e_4},\]\[{\bf e_2}\ast {\bf e_3}=\frac{1}{2}{\bf e_2}+\frac{1}{2}{\bf e_3}, \quad {\bf e_3}\ast {\bf e_4} =\frac{1}{2}{\bf e_3}+\frac{1}{2}{\bf e_4}.\]Let $a=d_{11}$, $b=d_{12}$, $c=d_{13}$, $d=d_{21}$, $e=d_{22}$, $f=d_{31}$, $g=d_{32}$, $h=d_{33}$, $i=d_{41}$ and $j=d_{43}$. By Lemma \ref{lemma2.5},
\[
D=(d_{ij})_{i,j\in [4]}=\left( {\begin{array}{cccc}
a & b & c & -(a+b+c)\\
d & e & -(d+e)  & 0\\
f & g & h & -(f+g+h) \\
i & 0 & j & -(i+j)\\
\end{array}}\right).
\]We compute $D({\bf e_2}\ast {\bf e_4})$ in two ways. First, 
\begin{align}
\nonumber D({\bf e_2}\ast {\bf e_4}) &= p_{24,2} D({\bf e_2})+p_{24,4} D({\bf e_4})\\
\label{equation49} &=p_{24,2}(d{\bf e_1}+e{\bf e_2} -(d+e){\bf e_3})+p_{24,4} (i{\bf e_1}+j {\bf e_3}-(i+j){\bf e_4}).
\end{align}Alternatively, 
\begin{align}
\nonumber D({\bf e_2}\ast {\bf e_4}) &= {\bf e_2}\ast D({\bf e_4}) + {\bf e_4}\ast D({\bf e_2}) \\
\nonumber &={\bf e_2}\ast (i{\bf e_1}+j {\bf e_3}-(i+j){\bf e_4})+{\bf e_4}\ast (d{\bf e_1}+e {\bf e_2}-(d+e){\bf e_3})\\
\label{equation50} &= \begin{aligned}[t]
             i(\frac{1}{2}{\bf e_1}+\frac{1}{2}{\bf e_2})+j (\frac{1}{2}{\bf e_2}+\frac{1}{2}{\bf e_3})-(i+j)(p_{24,2}{\bf e_2}+p_{24,4}{\bf e_4}) \\
            +d(\frac{1}{2}{\bf e_1}+\frac{1}{2}{\bf e_4})+e(p_{24,2}{\bf e_2}+p_{24,4}{\bf e_4})-(d+e)(\frac{1}{2}{\bf e_3}+\frac{1}{2}{\bf e_4}).
        \end{aligned}
\end{align}By comparing coefficients of ${\bf e_1}$, ${\bf e_2}$, ${\bf e_3}$ on both sides of (\ref{equation49}) and (\ref{equation50}), we get 
\begin{align}
\label{equation51}  (d-i)(\frac{1}{2}-p_{24,2}) &=0 \\
\label{equation52}  (i+j)(\frac{1}{2}-p_{24,2}) &=0\\
\label{equation53}  (j+d+e)(\frac{1}{2}-p_{24,2}) &=0. 
\end{align} Since $p_{24,2}\neq 1/2$, we know that $d=i=-j$ and $e=0$ by (\ref{equation51}), (\ref{equation52}), (\ref{equation53}). The derivation $D$ is in the following form:
\[
D=(d_{ij})_{i,j\in [4]}=\left( {\begin{array}{cccc}
a & b & c & -(a+b+c)\\
d & 0 & -d  & 0\\
f & g & h & -(f+g+h) \\
d & 0 & -d & 0\\
\end{array}}\right).
\]We compute $D({\bf e_2}\ast {\bf e_3})$ in two ways. First, 
\begin{align}
\nonumber D({\bf e_2}\ast {\bf e_3}) &= \frac{1}{2} D({\bf e_2})+\frac{1}{2} D({\bf e_3}) \\
\label{equation54} &=\frac{1}{2} (d{\bf e_1}-d {\bf e_3})+\frac{1}{2} (f{\bf e_1}+g{\bf e_2}+h{\bf e_3}-(f+g+h){\bf e_4}).
\end{align}Alternatively, 
\begin{align}
\nonumber D({\bf e_2}\ast {\bf e_3}) &= {\bf e_2}\ast D({\bf e_3}) +{\bf e_3}\ast D({\bf e_2}) \\
\nonumber &={\bf e_2}\ast (f{\bf e_1}+g{\bf e_2}+h{\bf e_3}-(f+g+h){\bf e_4})+{\bf e_3}\ast (d{\bf e_1}-d{\bf e_3})\\
\label{equation55} &=\begin{aligned}[t]
             f(\frac{1}{2}{\bf e_1}+\frac{1}{2}{\bf e_2})+g{\bf e_2}+h(\frac{1}{2}{\bf e_2}+\frac{1}{2}{\bf e_3}) -(f+g+h)(p_{24,2}{\bf e_2}\\
             +p_{24,4}{\bf e_4})+d(\frac{1}{2}{\bf e_1}+\frac{1}{2}{\bf e_3})-d{\bf e_3}.
        \end{aligned}
\end{align}By comparing coefficients of ${\bf e_2}$ on both sides of (\ref{equation54}) and (\ref{equation55}), we get 
\begin{align}
\label{equation56} (f+g+h)(\frac{1}{2}-p_{24,2}) =0.
\end{align}Since $p_{24,2}\neq 1/2$, we get $f+g+h=0$ by (\ref{equation56}). We do a similar computation on $D({\bf e_1}\ast {\bf e_2})$ to get $a+b+c=0$. We update the form of the derivation $D$ as follows:
\[
D=(d_{ij})_{i,j\in [4]}=\left( {\begin{array}{cccc}
a & b & -(a+b) & 0\\
d & 0 & -d  & 0\\
f & g & -(f+g) & 0 \\
d & 0 & -d & 0\\
\end{array}}\right).
\]We compute $D({\bf e_1}\ast {\bf e_4})$ in two ways. First, 
\begin{align}
\nonumber  D({\bf e_1}\ast {\bf e_4})&= \frac{1}{2} D({\bf e_1})+\frac{1}{2} D({\bf e_4}) \\
\label{equation57}  &= \frac{1}{2}(a{\bf e_1}+b{\bf e_2}-(a+b){\bf e_3})+\frac{1}{2}(d{\bf e_1}-d{\bf e_3}).
\end{align}Alternatively,
\begin{align}
\nonumber  D({\bf e_1}\ast {\bf e_4}) &= {\bf e_1}\ast D({\bf e_4}) + {\bf e_4}\ast D({\bf e_1})\\
\nonumber &= {\bf e_1}\ast (d{\bf e_1}-d{\bf e_3})+{\bf e_4}\ast (a{\bf e_1}+b{\bf e_2}-(a+b){\bf e_3})\\
\label{equation58} &=\begin{aligned}[t]
             d{\bf e_1}-d(\frac{1}{2}{\bf e_1}+\frac{1}{2}{\bf e_3})+a(\frac{1}{2}{\bf e_1}+\frac{1}{2}{\bf e_4})+b(p_{24,2}{\bf e_2}+p_{24,4}{\bf e_4})\\
             -(a+b)(\frac{1}{2}{\bf e_3}+\frac{1}{2}{\bf e_4}).
        \end{aligned}
\end{align}By comparing coefficients of ${\bf e_2}$ on both sides of (\ref{equation57}) and (\ref{equation58}), we get 
\begin{align}
\label{equation59} b(\frac{1}{2}-p_{24,2}) &=0.
\end{align}Since $p_{24,2} \neq 1/2$, we know that $b=0$ by (\ref{equation59}). We do a similar computation on $D({\bf e_3}\ast {\bf e_4})$ to get $g=0$. We summarize our results:

\begin{theorem} \label{caseI}
Let $(\mathbb{R}^4,\ast)$ be a four dimensional genetic Volterra algebra. In Case I: $|[4]_1|=3, |[4]_2|=2, |[4]_3|=3, |[4]_4|=2$, derivations are in the form:
\[
D=(d_{ij})_{i,j\in [4]}=\left( {\begin{array}{cccc}
a & 0 & -a & 0\\
d & 0 & -d  & 0\\
f & 0 & -f & 0 \\
d & 0 & -d & 0\\
\end{array}}\right).
\]
\end{theorem}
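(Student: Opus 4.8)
The plan is to reduce the ten free parameters of $D$ to the four displayed in the statement, by running each of the six products ${\bf e_i}\ast{\bf e_j}$ through the derivation identity $D({\bf e_i}\ast{\bf e_j})={\bf e_i}\ast D({\bf e_j})+{\bf e_j}\ast D({\bf e_i})$ and comparing coefficients in the standard basis $\{{\bf e_1},{\bf e_2},{\bf e_3},{\bf e_4}\}$. What makes Case I tractable is that, once Lemma \ref{lemma1} forces $d_{24}=d_{42}=0$ and Lemma \ref{lemma2.5} supplies the four row-sum conditions, the only structure constants different from $1/2$ are $p_{24,2}$ and $p_{24,4}=1-p_{24,2}$; hence every relation that appears will be of the form $L\cdot(\frac{1}{2}-p_{24,2})=0$ with $L$ linear in the $d_{ij}$, and since $p_{24,2}\neq 1/2$ this collapses to the genuine linear condition $L=0$.

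Concretely, I would begin with the ten-parameter matrix produced by the two lemmas, recording that ${\bf e_1}\ast{\bf e_2}=\frac{1}{2}{\bf e_1}+\frac{1}{2}{\bf e_2}$ and similarly for ${\bf e_1}\ast{\bf e_3}$, ${\bf e_1}\ast{\bf e_4}$, ${\bf e_2}\ast{\bf e_3}$, ${\bf e_3}\ast{\bf e_4}$, while ${\bf e_2}\ast{\bf e_4}=p_{24,2}{\bf e_2}+p_{24,4}{\bf e_4}$. Expanding $D({\bf e_2}\ast{\bf e_4})$ in both ways and equating the ${\bf e_1}$-, ${\bf e_2}$- and ${\bf e_3}$-coefficients gives three relations that force $d_{21}=d_{41}=-d_{43}$ and $d_{22}=0$. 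Substituting this back and repeating with $D({\bf e_2}\ast{\bf e_3})$ and $D({\bf e_1}\ast{\bf e_2})$, the ${\bf e_2}$-coefficients yield $d_{31}+d_{32}+d_{33}=0$ and $d_{11}+d_{12}+d_{13}=0$; then, with the matrix further reduced, $D({\bf e_1}\ast{\bf e_4})$ and $D({\bf e_3}\ast{\bf e_4})$ have ${\bf e_2}$-coefficients that force $d_{12}=0$ and $d_{32}=0$. Collecting all of the relations gives $d_{13}=-d_{11}$, $d_{33}=-d_{31}$, $d_{41}=d_{21}$ and $d_{43}=-d_{21}$, which is exactly the four-parameter family in the statement, with $a=d_{11}$, $d=d_{21}$, $f=d_{31}$.

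The content is elementary linear algebra; the step that needs care is the bookkeeping — each product ${\bf e_i}\ast{\bf e_j}$ must be expanded with its correct structure constants (keeping $p_{24,2}$ and $p_{24,4}$ symbolic while every other constant equals $1/2$), and each newly derived relation must be substituted back into $D$ before the next product is treated, so that successive comparisons isolate new unknowns rather than re-deriving old ones. The only genuine choice is the processing order of the six products; taking ${\bf e_2}\ast{\bf e_4}$ first, then ${\bf e_2}\ast{\bf e_3}$ and ${\bf e_1}\ast{\bf e_2}$, then ${\bf e_1}\ast{\bf e_4}$ and ${\bf e_3}\ast{\bf e_4}$ keeps every step clean, and no separate treatment of the trivial and non-trivial cases is needed.
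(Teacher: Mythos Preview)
Your proposal is correct and follows essentially the same approach as the paper: the paper also begins with $D({\bf e_2}\ast{\bf e_4})$ to obtain $d_{21}=d_{41}=-d_{43}$ and $d_{22}=0$, then uses $D({\bf e_2}\ast{\bf e_3})$ and $D({\bf e_1}\ast{\bf e_2})$ (comparing ${\bf e_2}$-coefficients) to get the two row-sum relations, and finishes with $D({\bf e_1}\ast{\bf e_4})$ and $D({\bf e_3}\ast{\bf e_4})$ to kill $d_{12}$ and $d_{32}$. Your processing order, the coefficients you single out, and the resulting relations coincide with the paper's argument step for step.
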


\subsection{Case J} \label{subsectionJ} By (\ref{equation2}), we have \[{\bf e_1}\ast {\bf e_2}=\frac{1}{2}{\bf e_1}+\frac{1}{2}{\bf e_2},\quad {\bf e_1}\ast {\bf e_3}=\frac{1}{2}{\bf e_1}+\frac{1}{2}{\bf e_3}, \quad {\bf e_1}\ast {\bf e_4}=\frac{1}{2}{\bf e_1}+\frac{1}{2}{\bf e_4},\]\[{\bf e_2}\ast {\bf e_3}=\frac{1}{2}{\bf e_2}+\frac{1}{2}{\bf e_3},\quad {\bf e_2}\ast {\bf e_4}=\frac{1}{2}{\bf e_2}+\frac{1}{2}{\bf e_4}, \quad {\bf e_3}\ast {\bf e_4} =\frac{1}{2}{\bf e_3}+\frac{1}{2}{\bf e_4}.\]By Lemma \ref{lemma2.5}, we summarize our results as follows: 
\begin{theorem} \label{caseJ}
Let $(\mathbb{R}^4,\ast)$ be a four dimensional genetic Volterra algebra. In Case J: $|[4]_1|=3, |[4]_2|=3, |[4]_3|=3, |[4]_4|=3$, the sum of each row in any derivation $D=(d_{ij})_{i,j\in [4]}$ is zero.  
\end{theorem}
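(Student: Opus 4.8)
The statement as written is an immediate consequence of Lemma~\ref{lemma2.5}: that lemma already establishes, for \emph{every} four dimensional genetic Volterra algebra and with no hypothesis whatsoever on the sets $[4]_i$, that any derivation $D=(d_{ij})$ satisfies $\sum_{j=1}^4 d_{ij}=0$ for each $i\in[4]$. Since Case~J is merely one admissible configuration of the $[4]_i$'s, the plan for the asserted direction is a single line: invoke Lemma~\ref{lemma2.5}.

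What distinguishes Case~J from the earlier cases, and what I would also record to see that the case analysis is complete, is that here the vanishing of the row sums is the \emph{only} constraint: every matrix $D=(d_{ij})_{i,j\in[4]}$ with $\sum_{j=1}^4 d_{ij}=0$ for all $i$ is in fact a derivation. To check this I would verify the Leibniz rule $D(u\ast v)=D(u)\ast v+u\ast D(v)$ on basis vectors, using that in Case~J one has ${\bf e_i}\ast{\bf e_i}={\bf e_i}$ and ${\bf e_i}\ast{\bf e_j}=\tfrac12{\bf e_i}+\tfrac12{\bf e_j}$ for $i\neq j$. The computational core is the identity ${\bf e_i}\ast w=\tfrac12 w$, valid for every $w=\sum_k w_k{\bf e_k}$ with $\sum_k w_k=0$, which follows at once from $\sum_{k\neq i}w_k=-w_i$. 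Granting it: for $i\neq j$ both $D({\bf e_i}\ast{\bf e_j})$ and $D({\bf e_i})\ast{\bf e_j}+{\bf e_i}\ast D({\bf e_j})$ collapse to $\tfrac12 D({\bf e_i})+\tfrac12 D({\bf e_j})$, while for $i=j$ the right-hand side is $2\,{\bf e_i}\ast D({\bf e_i})$, which the row-sum hypothesis turns into $D({\bf e_i})$ after collecting the coefficient of ${\bf e_i}$.

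I do not expect any real obstacle. The stated implication is a citation of Lemma~\ref{lemma2.5}; the converse is the pair of short verifications above. The only delicate point is the asymmetry between the diagonal product ${\bf e_i}\ast{\bf e_i}={\bf e_i}$ and the off-diagonal products ${\bf e_i}\ast{\bf e_j}=\tfrac12({\bf e_i}+{\bf e_j})$: expanding ${\bf e_i}\ast D({\bf e_i})$ makes the ${\bf e_i}$-coefficient pick up an extra $d_{ii}$ term, and it is precisely the condition $\sum_j d_{ij}=0$ that absorbs this discrepancy and makes the Leibniz identity hold on the nose.
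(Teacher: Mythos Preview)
Your proposal is correct and matches the paper's approach exactly: the paper simply records the multiplication table for Case~J and then invokes Lemma~\ref{lemma2.5}, with no further argument. Your additional verification of the converse (that every zero-row-sum matrix is a derivation in this case) goes beyond what the paper spells out, and your observation that ${\bf e_i}\ast w=\tfrac12 w$ whenever $\sum_k w_k=0$ is a clean way to see it.
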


\section{An application}  \label{section3}

Let $A$ be an algebra. A linear map $\bigtriangleup\colon A\rightarrow A$ is called {\it a local derivation} if, for any $x\in A$, there exists a derivation $D_{x}\colon A\rightarrow A$ such that $\bigtriangleup (x)=D_{x} (x)$.

\begin{theorem}
Let $(\mathbb{R}^4, \ast)$ be a four dimensional genetic Volterra algebra. Then any local derivation of $(\mathbb{R}^4,\ast)$ is a derivation.
\end{theorem}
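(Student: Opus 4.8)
The plan is to reduce the statement to the case-by-case classification of derivations obtained in Section \ref{section2}. Let $\bigtriangleup=(\delta_{ij})_{i,j\in[4]}$ be a local derivation on $(\mathbb{R}^4,\ast)$. The strategy is: first, extract structural constraints on the matrix $(\delta_{ij})$ from the definition of a local derivation (applied to basis vectors and to a few well-chosen linear combinations), then match these constraints against the admissible forms of genuine derivations enumerated in Theorems \ref{caseA}--\ref{caseJ}, and finally verify that the surviving matrix is actually a derivation. Since the whole discussion is up to a permutation of the standard basis, it suffices to treat each of the ten cases A--J separately.

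First I would record the easy consequences of locality. For each $i\in[4]$, applying the definition to $x={\bf e_i}$ gives a derivation $D_{{\bf e_i}}$ with $\bigtriangleup({\bf e_i})=D_{{\bf e_i}}({\bf e_i})$, so the $i$-th row of $(\delta_{ij})$ equals the $i$-th row of some genuine derivation. In particular Lemma \ref{lemma1} forces $\delta_{ij}=0$ whenever $i\neq j$ and $j\notin[4]_i$, and Lemma \ref{lemma2.5} forces $\sum_{j=1}^4\delta_{ij}=0$ for every $i$. Thus $\bigtriangleup$ already has the same ``shape'' (zero pattern plus zero row sums) as the general derivation matrix displayed in the relevant case. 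For the cases where Theorems \ref{caseC}, \ref{caseE}, \ref{caseF}, \ref{caseH} assert that the only derivation is trivial, each row of $\bigtriangleup$ is the row of a trivial derivation, hence zero, so $\bigtriangleup=0$ is a derivation and we are done.

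In the remaining cases (A, B, D, G, I, J) the nontrivial part is to show that the individual rows, each independently coming from a possibly different derivation, are forced to fit together into a single global derivation. The key extra input is to apply locality to sums such as ${\bf e_i}+{\bf e_j}$: the row constraints from $x={\bf e_i}+{\bf e_j}$ combined with linearity of $\bigtriangleup$ give relations linking row $i$ and row $j$ of $(\delta_{ij})$. For instance in Case A one must rule out the situation where $D_{{\bf e_1}}$ is nontrivial while $D_{{\bf e_2}}$ is trivial with incompatible structure constants; locality on ${\bf e_1}+{\bf e_2}$ pins down the remaining freedom, and then the matrix $(\delta_{ij})$ is literally of the form in Theorem \ref{caseA}, which is a derivation by that theorem. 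Cases D and J are the gentlest: there the general derivation in Theorem \ref{caseD} (resp.\ \ref{caseJ}) is parametrized by independent entries subject only to zero row sums, so the shape constraints already met by $\bigtriangleup$ put it in derivation form with no further work. Cases G and I require checking that the constraints forced on distinct rows (e.g.\ that rows $2$ and $4$ coincide in Case I, or that rows $1$ and $2$ vanish in Case G) do propagate; again this comes from applying locality to the appropriate two-term sums of basis vectors and comparing with the rigid forms in Theorems \ref{caseG}, \ref{caseI}.

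I expect the main obstacle to be Cases G and I, where the admissible derivation matrix is highly constrained (repeated rows, or several rows forced to zero) but an a priori local derivation only knows each row separately. One has to make sure the ``gluing'' relations coming from locality on ${\bf e_i}+{\bf e_j}$ — which produce, by linearity, that $\bigtriangleup({\bf e_i})+\bigtriangleup({\bf e_j})=D_{{\bf e_i}+{\bf e_j}}({\bf e_i}+{\bf e_j})$ for a single derivation $D_{{\bf e_i}+{\bf e_j}}$ — are strong enough to kill the spurious configurations; concretely one expands $D_{{\bf e_i}+{\bf e_j}}({\bf e_i}+{\bf e_j})=D_{{\bf e_i}+{\bf e_j}}({\bf e_i})+D_{{\bf e_i}+{\bf e_j}}({\bf e_j})+2D_{{\bf e_i}+{\bf e_j}}({\bf e_i})\ast{\bf e_j}+\cdots$ using ${\bf e_i}\ast{\bf e_j}=\tfrac12{\bf e_i}+\tfrac12{\bf e_j}$ and reads off coefficient relations. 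The computations are routine but must be done in each case; once they are in hand, each matrix $(\delta_{ij})$ coincides with a matrix already verified to be a derivation in the corresponding theorem, completing the proof.
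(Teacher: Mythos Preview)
Your plan is the paper's plan: use the classification of Theorems \ref{caseA}--\ref{caseJ}, read off each row of the local derivation $\bigtriangleup$ from locality at the basis vectors, and then check that the resulting matrix is a genuine derivation. The paper carries this out explicitly only for Case A and dismisses the rest as ``similar reasoning''. Two corrections to your write-up are in order.

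First, you over-complicate Cases A, B and G. In each of these the derivation matrix in the corresponding theorem is parametrized \emph{row by row} with no relation linking different rows (row $1$ depends only on $a$, row $2$ only on $b$, etc., and in Case G rows $1,2$ are identically zero for every derivation). Hence evaluating $\bigtriangleup$ at the ${\bf e_i}$ already puts it in the permitted form, and no ``gluing'' via linear combinations is needed. Your worry about ``incompatible structure constants'' does not arise: the algebra is fixed, so if any $D_{{\bf e_i}}$ is nontrivial the structure-constant constraints of that theorem hold globally, and every matrix of the permitted shape is then a derivation.

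Second, you are right that Case I genuinely needs more input---and here you are actually more careful than the paper. Theorem \ref{caseI} forces rows $2$ and $4$ of a derivation to coincide, and evaluation at basis vectors alone cannot detect that. However, your proposed mechanism is miswritten: the displayed identity
\[
D_{{\bf e_i}+{\bf e_j}}({\bf e_i}+{\bf e_j})=D_{{\bf e_i}+{\bf e_j}}({\bf e_i})+D_{{\bf e_i}+{\bf e_j}}({\bf e_j})+2D_{{\bf e_i}+{\bf e_j}}({\bf e_i})\ast{\bf e_j}+\cdots
\]
is false; $D$ is linear, so $D({\bf e_i}+{\bf e_j})=D({\bf e_i})+D({\bf e_j})$ with no product term. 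Moreover, locality at ${\bf e_2}+{\bf e_4}$ does \emph{not} force $\delta_{21}=\delta_{41}$: any value of $\bigtriangleup({\bf e_2}+{\bf e_4})$ of the form $\lambda({\bf e_1}-{\bf e_3})$ is hit by a derivation with $d=\lambda/2$. The correct test vector is ${\bf e_2}-{\bf e_4}$: by Theorem \ref{caseI} every derivation annihilates ${\bf e_2}-{\bf e_4}$, so locality gives $\bigtriangleup({\bf e_2}-{\bf e_4})=0$, which is exactly $\delta_{21}=\delta_{41}$. With that single fix, your argument for Case I closes and the rest proceeds as you describe.
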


\begin{proof}
By Theorem \ref{caseA} to Theorem \ref{caseJ}, non-trivial derivations exist in the following cases (up to a permutation of basis elements in $\mathbb{R}^4$):
 \renewcommand{\labelenumi}{\Roman{enumi}}
\begin{enumerate}
  \item $p_{12,1}=1/2$, $p_{13,1}=p_{23,2}\neq 1/2$, $p_{14,1}=p_{24,2}\neq 1/2, p_{34,3}\neq 1/2$.
  \item $p_{12,1}=p_{34,3}=1/2$, either $\{p_{13,1}=p_{23,2}\neq 1/2, p_{14,1}=p_{24,2}\neq 1/2\}$ or $\{p_{13,1}=p_{14,1}\neq 1/2, p_{23,2}=p_{24,2}\neq 1/2\}$. 
  \item $p_{12,1}=p_{13,1}=p_{23,2}=1/2$, $p_{14,1}\neq 1/2, p_{24,2}\neq 1/2, p_{34,3}\neq 1/2$.
  \item $p_{12,1}=p_{13,1}=p_{14,1}=p_{34,3}=1/2$, $p_{23,2}=p_{24,2}\neq 1/2$.
  \item $p_{12,1}=p_{13,1}=p_{14,1}=p_{23,2}=p_{34,3}=1/2$, $p_{24,2}\neq 1/2$.
  \item $p_{12,1}=p_{13,1}=p_{14,1}=p_{23,2}=p_{24,2}=p_{34,3}=1/2$.
\end{enumerate}In Case I, let $\bigtriangleup_I$ be a local derivation. By definition, there are derivations $D_{{\bf e_1}}$, $D_{{\bf e_2}}$, $D_{{\bf e_3}}$, $D_{{\bf e_4}}$ such that $\bigtriangleup_I({\bf e_1})=D_{{\bf e_1}}({\bf e_1})$, $\bigtriangleup_I({\bf e_2})=D_{{\bf e_2}}({\bf e_2})$, $\bigtriangleup_I({\bf e_3})=D_{{\bf e_3}}({\bf e_3})$, $\bigtriangleup_I({\bf e_4})=D_{{\bf e_4}}({\bf e_4})$. By Theorem \ref{caseA}, we have
\begin{align}
\nonumber D_{{\bf e_1}}({\bf e_1})=a({\bf e_1}-{\bf e_2}), D_{{\bf e_2}}({\bf e_2})=b({\bf e_1}-{\bf e_2}), D_{{\bf e_3}}({\bf e_3})=D_{{\bf e_4}}({\bf e_4})=0
\end{align}for some constants $a,b$. It means that $\bigtriangleup_I$ is defined by
\begin{align}
\nonumber \bigtriangleup_I({\bf e_1})=a({\bf e_1}-{\bf e_2}), \bigtriangleup_I({\bf e_2})=b({\bf e_1}-{\bf e_2}),  \bigtriangleup_I({\bf e_3})=\bigtriangleup_I({\bf e_4})=0.
\end{align}One easily verify that \[\bigtriangleup_I({\bf e_i}\ast {\bf e_j})={\bf e_i}\ast \bigtriangleup_I({\bf e_j})+{\bf e_j}\ast \bigtriangleup_I({\bf e_i})\]for all $i,j\in [4]$ and hence $\bigtriangleup_I$ is a derivation.
For the remaining cases, we deduce that any local derivation must be a derivation by similar reasoning as in Case I.
\end{proof}

\section{Acknowledgement}
The author is grateful to Farrukh Mukhamedov for some valuable discussions on the topic.

\Address

\end{document}